 \newenvironment{cem}
{
    \begin{enumerate}
        \setlength{\topsep}{0pt}
        \setlength{\parskip}{0pt}
        \setlength{\partopsep}{0pt}
        \setlength{\parsep}{0pt}         
        \setlength{\itemsep}{0pt} 
}
{
    \end{enumerate} 
}
\newtheorem{theorem}{Theorem}
\newtheorem{lemma}{Lemma}
\newtheorem{conjecture}{Conjecture}
\newtheorem{proposition}{Proposition}
\newtheorem{problem}{Problem}
\newcommand{\st}{\widetilde{\sigma}}
\begin{document}

\title{On the Sum Necessary to Ensure that a Degree Sequence is Potentially $H$-Graphic}
\author{Michael J.\ Ferrara$^{1,2}$ \qquad Timothy D.\ LeSaulnier$^{3}$ \qquad Casey K.\ Moffatt$^{1}$\\
 Paul S.\ Wenger$^{4}$}
 
\maketitle
\footnotetext[1]{Department of Mathematical and Statistical Sciences, University of Colorado Denver, Denver, CO 80217 ; email addresses: {\tt michael.ferrara@ucdenver.edu}, {\tt casey.moffatt@ucdenver.edu}.}
\footnotetext[2]{Research Supported in part by Simons Foundation Collaboration Grant \#206692.}
\footnotetext[3]{Department of Mathematics, University of Illinois, Urbana, IL 61801; {\tt tlesaul2@gmail.com}.}
\footnotetext[4]{School of Mathematical Sciences, Rochester Institute of Technology, Rochester, NY 14623; {\tt pswsma@rit.edu}}

\begin{abstract}

Given a graph $H$, a graphic sequence $\pi$ is {\it potentially $H$-graphic} if there is some realization of $\pi$ that contains $H$ as a subgraph.  In 1991, Erd\H{o}s, Jacobson and Lehel posed the following question:\begin{center} Determine the minimum even integer $\sigma(H,n)$ such that every $n$-term graphic sequence with sum at least $\sigma(H,n)$ is potentially $H$-graphic.\end{center}
This problem can be viewed as a ``potential" degree sequence relaxation of the (forcible) Tur\'{a}n problems.
  
While the exact value of $\sigma(H,n)$ has been determined for a number of specific classes of graphs (including cliques, cycles, complete bigraphs and others), very little is known about the parameter for arbitrary $H$.  In this paper, we determine $\sigma(H,n)$ asymptotically for all $H$, thereby providing an Erd\H{o}s-Stone-Simonovits-type theorem for the Erd\H{o}s-Jacobson-Lehel problem.

{\bf Keywords:} Degree sequence, potentially $H$-graphic sequence
\end{abstract}

\section{Introduction}\label{section:Intro}

A sequence of nonnegative integers $\pi =(d_1,d_2,...,d_n)$ is {\it graphic} if there is a (simple) graph $G$ of order $n$ having degree sequence $\pi$.  In this case, $G$ is said to {\it realize} or be a {\it realization of} $\pi$, and we will write $\pi=\pi(G)$.  If a sequence $\pi$ consists of the terms $d_1,\dots,d_t$ having multiplicities $\mu_1,\ldots,\mu_t$, we may write $\pi=(d_1^{~ \mu_1},\ldots,d_t^{~\mu_t})$.  Unless otherwise noted, throughout this paper all sequences are nonincreasing.  Additionally, we let $\sigma(\pi)$ denote the sum of the terms of $\pi$.

The study of graphic sequences dates to the 1950s, and includes the characterization of graphic sequences by Havel \cite{hav} and Hakimi \cite{hak} and an independent characterization by Erd\H{o}s and Gallai \cite{EG}.   Subsequent research sought to describe those graphic sequences that are realized by graphs with certain desired properties.  Such problems can be broadly classified into two types, first described as ``forcible" problems and ``potential" problems by A.R. Rao in \cite{Ra79}.  In a forcible degree sequence problem, a specified graph property must exist in every realization of the degree sequence $\pi$, while in a potential degree sequence problem, the desired property must be found in at least one realization of $\pi$. 

 Results on forcible degree sequences are often stated as traditional problems in structural or extremal graph theory, where a necessary and/or sufficient condition is given in terms of the degrees of the vertices (or equivalently the number of edges) of a given graph (e.g. Dirac's Theorem on hamiltonian graphs or the number of edges in a maximal planar graph).  Two older, but exceptionally thorough surveys on forcible and potential problems are due to Hakimi and Schmeichel \cite{HaSc78} and S.B. Rao \cite{SRa81}.

A number of degree sequence analogues to classical problems in extremal graph theory appear throughout the literature, including potentially graphic sequence variants of Hadwiger's Conjecture \cite{DM,RoSo10}, the Sauer-Spencer graph packing theorem \cite{BFHJKW}, and the Erd\H{o}s-S\' os Conjecture \cite{LiYin09}.

Pertinent to our work here is the {\it Tur\'{a}n Problem}, one of the most well-established and central problems in extremal graph theory.

\begin{problem}[The Tur\'{a}n Problem]
Let $H$ be a graph and $n$ be a positive integer.  Determine the minimum integer $ex(H,n)$ such that every graph of order $n$ with at least $ex(H,n)+1$ edges contains $H$ as a subgraph.
\end{problem}

We refer to $ex(H,n)$ as the {\it extremal number} or {\it extremal function} of $H$.  Mantel \cite{Man07} determined $ex(K_3,n)$ in 1907 and  Tur\'{a}n \cite{Tur41} determined $ex(K_t,n)$ for all $t\ge 3$ in 1941, a result considered by many to mark the start of modern extremal graph theory.  Outside of these results, the exact value of the extremal function is known for very few graphs (cf. \cite{BK,CGPW,EFGG}).  In 1966, however, Erd\H{o}s and Simonovits \cite{ESS} extended previous work of Erd\H{o}s and Stone \cite{ES} and determined $ex(H, n)$ asymptotically for arbitrary $H$.  More precisely, this seminal theorem gives exact asymptotics for $ex(H,n)$ when $H$ is a nonbipartite graph.

\begin{theorem}[The Erd\H{o}s-Stone-Simonovits Theorem]
If $H$ is a graph with chromatic number $\chi(H)\ge 2$, then $$ex(H,n)=\left( 1-\frac{1}{\chi(H)-1}\right){n\choose 2} + o(n^2).$$
\end{theorem}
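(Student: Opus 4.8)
The plan is to prove matching upper and lower bounds separately, writing $r=\chi(H)-1$ throughout so that the target quantity is $\left(1-\frac{1}{r}\right){n\choose 2}+o(n^2)$. For the lower bound I would exhibit a dense $H$-free graph: the balanced complete $r$-partite graph (Tur\'an graph) $T(n,r)$ is $r$-colorable, hence cannot contain the $(r+1)$-chromatic graph $H$, while a direct count shows it has at least $\left(1-\frac{1}{r}\right){n\choose 2}-o(n^2)$ edges. Thus $ex(H,n)\ge \left(1-\frac{1}{r}\right){n\choose 2}-o(n^2)$, and it remains only to establish the complementary upper bound $ex(H,n)\le \left(1-\frac{1}{r}\right){n\choose 2}+o(n^2)$.

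For the upper bound the first step is to replace $H$ by a more symmetric supergraph. Fixing a proper $(r+1)$-coloring of $H$, each color class has at most $|V(H)|$ vertices, so $H$ embeds in the complete $(r+1)$-partite graph $K_{r+1}(t)$ having $t=|V(H)|$ vertices in each part. Consequently any graph containing $K_{r+1}(t)$ also contains $H$, and it suffices to show that for every $\epsilon>0$ there is an $n_0$ so that every $n$-vertex graph with $n\ge n_0$ and at least $\left(1-\frac{1}{r}+\epsilon\right){n\choose 2}$ edges contains $K_{r+1}(t)$. This last assertion is precisely the Erd\H{o}s-Stone theorem, and establishing it is the crux of the argument.

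To prove the Erd\H{o}s-Stone statement I would apply Szemer\'edi's Regularity Lemma to such a dense graph $G$, obtaining an $\epsilon'$-regular partition into a bounded number $k$ of clusters for a suitably small $\epsilon'$. Forming the reduced (cluster) graph $R$ on these clusters, joining two clusters precisely when the corresponding pair is $\epsilon'$-regular with density exceeding a chosen threshold $d$, one checks that deleting the edges of $G$ lying inside clusters, across irregular pairs, and across sparse pairs destroys only $o(n^2)$ edges once $\epsilon'$, $d$, and $1/k$ are small relative to $\epsilon$; hence $R$ has edge density at least $1-\frac{1}{r}+\frac{\epsilon}{2}$. By Tur\'an's theorem $R$ then contains a copy of $K_{r+1}$, and the Embedding (Key) Lemma lets me greedily locate $t$ vertices in each of the corresponding $r+1$ clusters, one part at a time, so that all required cross-pairs are present. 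This produces the desired $K_{r+1}(t)$ in $G$ provided each cluster is large enough, which holds once $n$ is large.

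The main obstacle is the Erd\H{o}s-Stone step itself: everything else is a short reduction, but extracting a fixed-size complete multipartite subgraph from a merely dense host graph genuinely requires a tool of the strength of the Regularity Lemma (or, alternatively, a delicate induction on $r$ whose base case $r=1$ is the K\H{o}v\'ari-S\'os-Tur\'an bound for $K_{t,t}$). Care is needed to order the quantifiers correctly, choosing $\epsilon'$ and the density threshold $d$ small enough in terms of $\epsilon$, $r$, and $t$ before invoking regularity, and to ensure the clusters retain enough vertices to support the embedding. Finally, since $\epsilon>0$ is arbitrary, combining the two bounds yields $ex(H,n)=\left(1-\frac{1}{r}\right){n\choose 2}+o(n^2)$, as claimed.
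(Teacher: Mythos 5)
You should note at the outset that the paper offers no proof of this statement: the Erd\H{o}s--Stone--Simonovits Theorem appears there only as quoted background, with citations to the original papers of Erd\H{o}s--Stone (1946) and Erd\H{o}s--Simonovits (1966), so there is no in-paper argument to compare against. Judged on its own merits, your outline is the standard modern proof and is correct in its essentials: the lower bound via the Tur\'an graph $T(n,r)$ (which is $r$-colorable and hence $H$-free since $\chi(H)=r+1$), the reduction of the upper bound to finding $K_{r+1}(t)$ with $t=|V(H)|$, and the Erd\H{o}s--Stone step via Szemer\'edi's Regularity Lemma, the cluster graph, Tur\'an's theorem to find $K_{r+1}$ in the reduced graph, and the Embedding Lemma to blow it up. Your quantifier discipline ($\epsilon'$ and the density threshold $d$ chosen small relative to $\epsilon$, $r$, $t$ before invoking regularity) is exactly where such proofs usually go wrong, and you flag it correctly; you also correctly identify that the degenerate case $r=1$ (bipartite $H$, where the theorem asserts only $ex(H,n)=o(n^2)$) can be handled by the K\H{o}v\'ari--S\'os--Tur\'an bound. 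It is worth knowing that your route differs from the historical one: the original Erd\H{o}s--Stone argument, and the Erd\H{o}s--Simonovits refinement, are elementary inductions on $r$ built on KST-type counting, which predate the Regularity Lemma and yield far better quantitative dependence of $n_0$ on $\epsilon$ and $t$ (the regularity route hides tower-type constants), whereas your approach buys conceptual transparency and a template that generalizes to many other embedding problems. As a proposal rather than a complete proof, the regularity and embedding lemmas are of course invoked as black boxes, but that is appropriate for a result of this provenance, and no step of your plan would fail.
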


   Given a family ${\mathcal F}$ of graphs, a graphic sequence $\pi$ is {\it potentially ${\mathcal F}$-graphic} if there is a realization of $\pi$ that contains some $F\in {\mathcal F}$ as a subgraph.  If ${\mathcal F}=\{H\}$, we say that $\pi$ is potentially $H$-graphic.  The focus of this paper is the following problem posed by Erd\H{o}s, Jacobson and Lehel in 1991 \cite{EJL}.  

\begin{problem}\label{problem:potential}
 Given a graph $H$, determine $\sigma(H,n)$, the minimum even integer such that every $n$-term graphic sequence $\pi$ with $\sigma(\pi)\ge \sigma(H,n)$ is potentially $H$-graphic.
\end{problem}

  We will refer to $\sigma(H,n)$ as the {\it potential number} or {\it potential function} of $H$.  As $\sigma(\pi)$ is twice the number of edges in any realization of $\pi$, Problem~\ref{problem:potential} can be viewed as a potential degree sequence relaxation of the Tur\'{a}n problem.  
  
  In \cite{EJL}, Erd\H{o}s, Jacobson and Lehel conjectured that $\sigma(K_t,n)=(t-2)(2n-t+1)+2$.  The cases $t=3,4$ and 5 were proved separately (see respectively \cite{EJL}, \cite{GJL} and \cite{LiSong2}, and \cite{LiXia}), and Li, Song and Luo \cite{LiSongLuo} proved the conjecture true for $t \geq 6$ and $n \geq {{t} \choose{2}}+3$.  In addition to these results for complete graphs, the value of $\sigma(H,n)$ has been determined exactly for a number of other specific graph families, including complete bipartite graphs \cite{ChenLiYin, LiYin2}, disjoint unions of cliques \cite{F}, and the class of graphs with independence number two \cite{FS} (for a number of additional examples, we refer the reader to the references of \cite{FS}).  Despite this, relatively little is known in general about the potential function for arbitrary $H$.  In this paper, we determine $\sigma(H,n)$ asymptotically for all $H$, thereby giving a potentially $H$-graphic sequence analogue to the Erd\H{o}s-Stone-Simonovits Theorem.

\section{Constructions and Statement of Main Result}

We assume that $H$ is an arbitrary graph of order $k$ with at least one nontrivial component and furthermore that $n$ is sufficiently large relative to $k$.  We let $\Delta(F)$ denote the maximum degree of a graph $F$, and let $F<H$ denote that $F$ is an induced subgraph of $H$.  For each $i\in\{\alpha(H)+1,\ldots,k\}$ let $$\nabla_i(H)=\min\{\Delta (F):F < H, |V(F)|=i\},$$ and consider the sequence  $$\widetilde{\pi}_i(H,n) = ((n-1)^{k-i}, (k-i+\nabla_i(H)-1)^{n-k+i}).$$
If this sequence is not graphic, that is if $n-k+i$ and $\nabla_i(H)-1$ are both odd, we reduce the smallest term by one. To see that this yields a graphic sequence, we make two observations.  First, $(\nabla_i(H)-1)$-regular graphs of order $n-k+i \ge \nabla_i(H)$ exist whenever $\nabla_i(H)-1$ and $n-k+i$ are not both odd.  If $n-k+i$ and $\nabla_i(H)-1$ are both odd, it is not difficult to show that the sequence $((\nabla_i(H)-1)^{n-k+i-1},\nabla_i(H)-2)$ is graphic.  

We first show that the sequence $\widetilde{\pi}_i(H,n)$ is not potentially $H$-graphic for each $i\in\{\alpha(H)+1,\ldots,k\}$, thus establishing a lower bound on $\sigma(H,n)$.  

\begin{proposition}\label{theorem:LowerBound}
If $H$ is a graph of order $k$ and $n$ is a positive integer, then $\sigma(H,n) \ge \sigma(\widetilde{\pi}_i(H,n)) + 2$ for all $i\in\{\alpha(H)+1,\ldots,k\}$.
\end{proposition}

\begin{proof}
Every realization $G$ of $\widetilde{\pi}_i(H,n)$ is a complete graph on $k-i$ vertices joined to an $(n-k+i)$-vertex graph $G_i$ with maximum degree $\nabla_i-1$.
Any $k$-vertex subgraph of $G$ contains at least $i$ vertices in $G_i$.
Thus $H$ is not a subgraph of $G$ since every $i$-vertex induced subgraph of $H$ has maximum degree at least $\nabla_i$.
\end{proof}

The focus of this paper is the asymptotic behavior of the potential function.  As such, let $$\widetilde{\sigma}_i(H) = 2(k-i)+\nabla_i(H)-1,$$ which is the leading coefficient of $\sigma(\widetilde{\pi}_i(H,n))$.  In \cite{FS}, the first author and J.\ Schmitt conjectured the following.

\begin{conjecture}\label{conjecture:AsymptoticOld}
Let $H$ be a graph, and let $\epsilon >0$.  There exists an $n_0=n_0(\epsilon, H)$ such that for any $n>n_0$,
$$\sigma(H,n) \leq \max_{H'\subseteq H}(\widetilde{\sigma}_{\alpha(H')+1}(H')+\epsilon)n.$$
\end{conjecture}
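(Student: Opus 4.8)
The plan is to prove directly that if $\pi=(d_1\ge\cdots\ge d_n)$ is graphic with $\sigma(\pi)\ge(\sigma^*+\epsilon)n$, where $\sigma^*:=\max_{H'\subseteq H}\widetilde{\sigma}_{\alpha(H')+1}(H')$, then some realization of $\pi$ contains $H$. The constructive engine is a degree-sequence sufficient condition based on a root/host decomposition of $H$: I single out a set $A\subseteq V(H)$ to be carried by the highest-degree vertices of the realization (the \emph{roots}) and set $B:=V(H)\setminus A$, chosen so that $H[B]$ has the smallest possible maximum degree (so that $\Delta(H[B])=\nabla_{|B|}(H)$, or $B$ is independent when $|B|\le\alpha(H)$). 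Since a root can be made adjacent to every other vertex of the copy by $2$-switches, the only real requirements are that there be $|A|$ roots and that the remaining copy vertices (the \emph{hosts}) can be matched to $B$ so that each host has degree at least the $H$-degree of the vertex of $B$ it carries; this matching is tested by comparing $(d_{|A|+1},d_{|A|+2},\dots)$ with the sorted degree demands of $B$.

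First I would dispose of the easy regime through a dichotomy on $t:=|\{i:d_i\ge n^{2/3}\}|$. If $t\ge k$, then $H$ embeds immediately: take any $k$ such vertices and install the at most $\binom{k}{2}$ edges of $H$ among them by $2$-switches, each available because every such vertex has far more than $k$ neighbours outside the chosen set. The substance lies in the regime $t\le k-1$, which I treat by contradiction: assume that for every admissible choice of roots the host-matching fails. By a Hall-/Gale--Ryser-type duality, such a simultaneous failure exposes a set $W\subseteq V(H)$ whose induced subgraph $H'=H[W]$ is degree-demanding, in the sense that $\pi$ has too few vertices able to carry $H'$; quantifying this deficiency with the Erd\H{o}s--Gallai inequalities (to bound the few high-degree terms of $\pi$) forces $\sigma(\pi)\le\widetilde{\sigma}_{\alpha(H')+1}(H')\,n+o(n)\le\sigma^* n$, contradicting the hypothesis. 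This is exactly where the maximum over subgraphs enters, and why it must: because $\alpha(H')\le\alpha(H)$ for every $H'\subseteq H$, the demanded index cannot in general be realized as $\alpha(H)+1$ on $H$ itself, so the binding inequality is supplied by a denser subgraph, and the sum-maximising obstructions are precisely the sequences $\widetilde{\pi}_{\alpha(H')+1}(H',n)$.

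To make the sufficient condition rigorous I would fix a realization $G$ of $\pi$, reserve the $k$ vertices destined to carry the copy, and install the edges of $H$ among them by a sequence of $2$-switches none of which disturbs a previously installed copy-edge: a missing copy-edge $uv$ is created by deleting edges $ux,vy$ with $x,y$ outside the copy and $xy\notin E(G)$, and adding $uv$ and $xy$. Roots have ample spare degree for this. For host vertices, whose spare degree can be negligible, I would instead phrase the completion as a residual-degree-sequence problem and invoke the Kleitman--Wang/Gale--Ryser theorem to certify that the sequence obtained after subtracting the prescribed copy-edges is graphic; this is the step that consumes the host degree inequalities produced by the matching.

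I expect the main obstacle to be the sharpness of the extremal bookkeeping in the contradiction step rather than the embedding itself. The target constant $\sigma^*$ is attained \emph{exactly} by the constructions $\widetilde{\pi}_i(H,n)$, in which the host degree is $\nabla_i-1$ rather than the demanded $\nabla_i$; recovering $\sigma^*$ and not merely $\sigma^*+O(1)$ therefore requires the bound $\sigma(\pi)\le\widetilde{\sigma}_{\alpha(H')+1}(H')\,n+o(n)$ to be tight to within $o(n)$, which in turn demands a careful application of the Erd\H{o}s--Gallai inequalities and a clean treatment of the intermediate-degree vertices (those between the host threshold and $n^{2/3}$). Translating the failure of the host-matching into the correct binding subgraph $H'$ — so that the deficiency is charged to $\widetilde{\sigma}_{\alpha(H')+1}(H')$ and not to a larger quantity — is the combinatorial heart of the argument, and is where I would expect to spend the most effort.
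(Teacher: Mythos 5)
Your proposal attempts to prove a statement that the paper itself does \emph{not} prove: Conjecture~\ref{conjecture:AsymptoticOld} is left explicitly open there. What the paper proves (Theorem~\ref{theorem:potential_asymptotics}, via Theorems~\ref{theorem:LowerBound} and~\ref{theorem:PotErdosStone}) is $\sigma(H,n)=\widetilde{\sigma}(H)n+o(n)$, where $\widetilde{\sigma}(H)=\max_{\alpha(H)+1\le i\le |H|}\widetilde{\sigma}_i(H)$ is a maximum over \emph{sizes of induced subgraphs of $H$ itself}, each measured through $\nabla_i(H)$; the authors then state that they are ``unable to either verify or disprove'' whether this equals your target constant $\sigma^*=\max_{H'\subseteq H}\widetilde{\sigma}_{\alpha(H')+1}(H')$. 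This discrepancy is not cosmetic. By Theorem~\ref{theorem:LowerBound}, no realization of $\widetilde{\pi}_{i}(H,n)$ contains $H$, so $\sigma(H,n)\ge \widetilde{\sigma}(H)n-O(1)$; hence the conjecture holds for $H$ if and only if $\widetilde{\sigma}(H)\le \sigma^*$ (the reverse inequality $\sigma^*\le\widetilde{\sigma}(H)$ already follows from monotonicity of $\sigma(\cdot,n)$ under subgraphs together with Theorems~\ref{theorem:LowerBound} and~\ref{theorem:PotErdosStone}). So any correct proof of the conjecture must contain, somewhere, a proof of exactly the combinatorial identity the authors say they cannot resolve --- and for all the paper knows, that identity, and hence the conjecture, could even be false.

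That identity is precisely the step your proposal asserts without argument. When an embedding attempt of the kind you describe fails, what is genuinely exposed is an index $i$ with $\alpha(H)+1\le i\le k$: roughly $k-i$ terms of very large degree with all remaining degrees below $k-i+\nabla_i(H)$, which yields only $\sigma(\pi)\le\widetilde{\sigma}_i(H)n+o(n)\le\widetilde{\sigma}(H)n+o(n)$. Your claim that the failure can instead be charged to an induced subgraph $H'=H[W]$ \emph{at its own index} $\alpha(H')+1$, giving $\sigma(\pi)\le\widetilde{\sigma}_{\alpha(H')+1}(H')n+o(n)\le\sigma^*n$, is not what a Hall/Gale--Ryser duality delivers: the subgraph $F_i<H$ attaining $\nabla_i(H)$ has $i$ vertices, but $i$ need not equal $\alpha(F_i)+1$, and there is no a priori inequality relating $\widetilde{\sigma}_i(H)$ to $\widetilde{\sigma}_{\alpha(F_i)+1}(F_i)$ or to the value of any other subgraph at its own $\alpha+1$ index. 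Indeed, if your charging step were valid, applying it to the sequence $\widetilde{\pi}_{i^*}(H,n)$, where $i^*$ attains $\widetilde{\sigma}(H)$ --- a sequence with at most $k-1$ large terms, sum $\widetilde{\sigma}(H)n-O(1)$, and no realization containing $H$ --- would immediately give $\widetilde{\sigma}(H)\le\sigma^*$ and settle the paper's open problem in two lines. So this step cannot be the ``extremal bookkeeping'' your final paragraph anticipates; it is the entire difficulty, and the proposal offers no argument for it. The remaining ingredients of your outline (Yin--Li's Theorem~\ref{theorem:YL} when at least $k$ terms exceed $n^{2/3}$, $2$-switch embeddings with bounded-degree hosts, Kleitman--Wang for residual sequences) are sound and close in spirit to the machinery of the paper's proof of Theorem~\ref{theorem:PotErdosStone}, but that machinery delivers the upper bound with constant $\widetilde{\sigma}(H)$, not $\sigma^*$.
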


The condition that one must examine subgraphs of $H$ is necessary.
As an example, for $t\ge 3$ let $H$ be obtained by subdividing one edge of $K_{1,t}$.
Since $K_{1,t}$ is a subgraph of $H$, any sequence that is potentially $H$-graphic is necessarily potentially $K_{1,t}$-graphic.
However, both graphs have independence number $t$ and  $\widetilde{\sigma}_{t+1}(H)<\widetilde{\sigma}_{t+1}(K_{1,t})$.

We show that in fact one needs only examine somewhat large induced subgraphs of $H$.
The following, which determines the asymptotics of the potential function precisely for arbitrary $H$, is the main result of this paper.  

\begin{theorem}\label{theorem:potential_asymptotics}
Let $H$ be a graph of order $k$ and let $n$ be a positive integer.  If $\widetilde{\sigma}(H)$ is the maximum of $\widetilde{\sigma}_i(H)$ for $i\in\{\alpha(H)+1,\ldots,k\}$, then $$\sigma(H,n) = \widetilde{\sigma}(H)n + o(n).$$
\end{theorem}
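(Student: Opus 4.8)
The plan is to prove matching bounds. For the lower bound I would show that each test sequence $\widetilde{\pi}_i(H,n)$, though graphic, fails to be potentially $H$-graphic; since $\sigma(\widetilde{\pi}_i(H,n)) = \widetilde{\sigma}_i(H)\,n + O(1)$, maximizing over $i$ yields $\sigma(H,n) \ge \widetilde{\sigma}(H)\,n - O(1)$. For the upper bound I would show that for every $\epsilon > 0$ and all large $n$, any graphic sequence with $\sigma(\pi) \ge (\widetilde{\sigma}(H)+\epsilon)n$ is potentially $H$-graphic, which gives $\sigma(H,n) \le \widetilde{\sigma}(H)\,n + o(n)$.

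First, the lower bound. The central observation is a rigidity property of $\widetilde{\pi}_i(H,n)$: in every realization $G$, each of the $k-i$ vertices of degree $n-1$ is universal, so every vertex of degree $k-i+\nabla_i(H)-1$ is forced to be adjacent to all $k-i$ of them, leaving it exactly $\nabla_i(H)-1$ neighbors among the remaining low-degree vertices $L$. Hence $G[L]$ has maximum degree at most $\nabla_i(H)-1$ regardless of the realization (the parity correction only lowers one degree further). Now suppose $H \subseteq G$ via an embedding $\phi$. At most $k-i$ vertices of the copy are universal, so the copy meets $L$ in a set $B$ with $|B| \ge i$, and the copy restricted to $B$ is the induced subgraph $F = H[\phi^{-1}(B)]$. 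Since $\nabla$ is nondecreasing in its index — deleting a vertex from an optimal induced subgraph on $j$ vertices gives $\nabla_{j-1}(H) \le \nabla_j(H)$ — we get $\Delta(G[B]) \ge \Delta(F) \ge \nabla_{|B|}(H) \ge \nabla_i(H)$, contradicting $\Delta(G[B]) \le \nabla_i(H)-1$. This proves $\widetilde{\pi}_i(H,n)$ is not potentially $H$-graphic, and accounting for parity gives the stated lower bound.

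For the upper bound, the idea is to embed $H$ by splitting its vertex set as $V(H) = R \cup V(F)$, where $F$ is an induced subgraph on $i$ vertices with $\Delta(F) = \nabla_i(H)$ and $|R| = k-i$: I would map $R$ to high-degree vertices of a realization and embed $F$ into a sparse part. The argument has two ingredients. The first is an embedding lemma: if a realization has $k-i$ vertices adjacent to all the vertices used for $F$, together with $i$ further vertices of degree large enough to carry the adjacencies of $F$ and the edges to $R$, then $H$ appears; the required local adjacencies are installed by $2$-switches, which the $o(n)$ slack and large $n$ make available without disturbing the rest of the degree sequence. The second ingredient is a counting step showing that $\sigma(\pi) \ge (\widetilde{\sigma}(H)+\epsilon)n$ forces, for some admissible $i$, both a supply of at least $k-i$ near-universal vertices and enough remaining edge-room to produce a sparse part hosting a vertex of degree $\nabla_i(H)$ in the correct role. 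This is exactly the tradeoff optimized by $\widetilde{\sigma}_i(H) = 2(k-i)+\nabla_i(H)-1$, so exceeding $\max_i \widetilde{\sigma}_i(H) = \widetilde{\sigma}(H)$ guarantees a feasible split, with the monotonicity of $\nabla_i(H)$ controlling how the feasible $i$ is selected.

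I expect the embedding/realization step to be the main obstacle. Converting the degree and counting information into a single realization of $\pi$ that simultaneously contains the copy of $H$ is delicate: after reserving high-degree vertices for $R$ and assigning roles for $F$, one must verify the residual sequence remains graphic and that all prescribed adjacencies can be routed at once. I would handle this with a Kleitman--Wang/Havel--Hakimi style peeling of the reserved high-degree vertices followed by $2$-switches in the sparse part, using the linear slack to absorb parity and low-order error terms. Making the counting step match $\widetilde{\sigma}(H)$ exactly — so that the threshold is neither too large nor too small — is the other point requiring care, and is where the precise form of $\widetilde{\sigma}_i(H)$ and the induced-subgraph definition of $\nabla_i(H)$ are essential.
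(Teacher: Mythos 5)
Your lower-bound half is correct and is essentially the paper's proof of Theorem~\ref{theorem:LowerBound}: every realization of $\widetilde{\pi}_i(H,n)$ is $K_{k-i}$ joined to a graph of maximum degree $\nabla_i(H)-1$, and a copy of $H$ would have to place at least $i$ vertices in the sparse part (your use of the monotonicity $\nabla_{i}(H)\le\nabla_{i+1}(H)$ is a harmless variant of the paper's observation that any $i$ of those vertices already induce a subgraph of $H$ of maximum degree at least $\nabla_i(H)$). The genuine gap is in your upper bound, specifically the ``counting step.'' It is simply false that $\sigma(\pi)\ge(\widetilde{\sigma}(H)+\epsilon)n$ forces a supply of $k-i$ near-universal vertices for some admissible $i$: the sequence $\pi=(r^n)$ with $r$ a constant slightly above $\widetilde{\sigma}(H)$ (parity adjusted) exceeds your threshold while having no vertex of degree larger than $r$, so for every $i<k$ the set $R$ has nowhere to go, and for $i=k$ your plan demands embedding all of $H$ — not merely ``a vertex of degree $\nabla_i(H)$'' — into an essentially regular bounded-degree realization. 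This bounded-degree regime is not a corner case; it is the heart of the problem, and the paper isolates it as the Bounded Maximum Degree Theorem (Theorem~\ref{theorem:BMDT}): degree sufficiency for $H$, $d_n\ge k-\alpha(H)$, and $d_1<n-f(\alpha(H),k)$ already imply $\pi$ is potentially $H$-graphic. Your sketch relegates exactly this to ``2-switches made available by the $o(n)$ slack,'' but in the regular example there is no per-vertex slack; the real difficulty, resolved by Claim~\ref{claim:nonneb_in_space} in the paper, is to certify that enough vertices outside the target set $S$ have neighbors outside $S$ so that the needed 2-switches have edges to consume, the alternative being a direct construction of $K_{k-\alpha(H)}\vee\overline{K_{\alpha(H)}}\supseteq H$. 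Nothing in your proposal supplies this dichotomy, and without it the embedding step fails precisely where the threshold $\widetilde{\sigma}(H)$ is tight.

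The workable architecture is the reverse of yours: rather than positing high-degree vertices, one peels a near-universal vertex whenever $d_1\ge n'-f(\alpha(H),k-i)$ (via Theorem~\ref{theorem:KleitmanWang} and Lemma~\ref{lemma:KleitmanWangIt}, with the bookkeeping $\sigma(\pi_i)\ge(\widetilde{\sigma}(H)-2i)n_i+\omega(n_i)/2^i$, which is only affordable because Theorem~\ref{theorem:YL} gives $d_{k+1}\le 2k-4$ unless $\pi$ is already potentially $K_k$-graphic, so deleting the at most $f$ nonneighbors of the peeled vertex costs $O(1)$ in sum), stopping after at most $k-\alpha(H)$ peels or when the maximum degree is bounded, and only then invoking Theorem~\ref{theorem:BMDT}. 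A second, smaller gap in your plan: even once a degree-sufficient split $K_t\vee F_{k-\ell-t}$ is identified, one may have $\alpha(F_{k-\ell-t})<\alpha(H)$, so the minimum-degree hypothesis of any BMDT-style statement fails for the target $K_t\vee F_{k-\ell-t}$; the paper must detour through a realization containing $K_t\vee\overline{K_{k-t-\ell}}$, apply Yin's split-graph lemma (Lemma~\ref{lemma:SplitGraph}) to place it on the highest-degree vertices, and then build $F_{k-\ell-t}$ by hand in the sparse part, verifying the residual sequence graphic via Erd\H{o}s--Gallai. Your remark that ``monotonicity of $\nabla_i(H)$ controls how the feasible $i$ is selected'' does not address this obstruction.
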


As was pointed out in \cite{FS}, Conjecture \ref{conjecture:AsymptoticOld} is correct for all graphs $H$ for which $\sigma(H,n)$ is known.  Consequently, it is feasible that Theorem \ref{theorem:potential_asymptotics} is actually an affirmation of Conjecture \ref{conjecture:AsymptoticOld}. That is, for all $H$ is it possible that $$\widetilde{\sigma}(H) = \max_{H'\subseteq H}(\widetilde{\sigma}_{\alpha(H')+1}(H')),$$ however we are unable to either verify or disprove this at this time.  

The proof of Theorem \ref{theorem:potential_asymptotics} is an immediate consequence of Proposition \ref{theorem:LowerBound} and the following result.  

\begin{theorem}\label{theorem:PotErdosStone}
Let $H$ be a graph, and let $\omega=\omega(n)$ be an increasing function that tends to infinity with $n$.  There exists an $N=N(\omega, H)$ such that for any $n\ge N$,
\[\sigma(H,n) \le \widetilde{\sigma}(H)n+\omega(n).\]
\end{theorem}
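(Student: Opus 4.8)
The plan is to establish the equivalent statement that, for $n$ large, every $n$-term graphic sequence $\pi$ with $\sigma(\pi)\ge\widetilde{\sigma}(H)n+\omega(n)$ is potentially $H$-graphic. Writing $k=|H|$, I would fix an arbitrary realization $G$ of $\pi$ and transform it, using a bounded number of $2$-switches (which preserve $\pi$ and, by the classical fact that $2$-switches connect all realizations of a graphic sequence, keep us among realizations of $\pi$), into one containing $H$. The target copy mirrors the extremal sequences $\widetilde{\pi}_i(H,n)$: for a suitable index $i$ with $\alpha(H)+1\le i\le k$ I fix an induced subgraph $F<H$ on $i$ vertices with $\Delta(F)=\nabla_i(H)$, map the $k-i$ vertices of $H$ outside $F$ onto high-degree ``dominating'' vertices, and realize $F$ among low-degree vertices. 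The crucial leverage is that $\sigma(\pi)\ge\widetilde{\sigma}(H)n=\max_i\widetilde{\sigma}_i(H)\,n\ge\widetilde{\sigma}_i(H)\,n$ for every admissible $i$, so whichever index the degree distribution forces upon us, the hypothesis supplies exactly the total degree that index demands.

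I would begin with structural bookkeeping. We may assume $\omega(n)=o(n)$, since the claim only weakens as $\omega$ grows, so the average degree of $\pi$ is $\widetilde{\sigma}(H)+o(1)$ and the graph is sparse: at most $2\widetilde{\sigma}(H)+o(1)$ vertices have degree exceeding $n/2$, a number bounded in terms of $k$. I partition $V(G)$ into a bounded set of high-degree vertices, each adjacent to all but $o(n)$ others and hence switchable onto any prescribed bounded set, and the low-degree remainder. I also record the monotonicity $\nabla_{j-1}(H)\le\nabla_j(H)$, obtained by deleting a maximum-degree vertex from a minimizer on $j$ vertices, which lets the requirement on the low part be phrased uniformly, and I track ``residual degrees'' $d_v-(k-i)$, i.e.\ degrees discounted by the edges spent on the dominating vertices.

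The heart of the argument is to read off from the actual degree distribution a number $s=k-i$ of dominating vertices for which enough medium-degree vertices survive to host $F$. After committing $s$ dominating vertices, a low vertex must retain residual degree at least $\Delta(F)=\nabla_{k-s}(H)$ to carry its edges of $F$, so I must produce at least $i=k-s$ such vertices. This is where $\widetilde{\sigma}(H)=\max_i\widetilde{\sigma}_i(H)$ and $\omega(n)\to\infty$ become indispensable: the extremal sequence $\widetilde{\pi}_i$ is $(\nabla_i(H)-1)$-regular on its low part and falls a single unit short at every vertex, so a sequence meeting the threshold with an extra $\omega(n)$ units of degree must either lift at least $i$ low vertices to residual degree $\ge\nabla_i(H)$, or concentrate its surplus on a few vertices of large degree. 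In the latter case I absorb those vertices into the dominating set, decreasing $i$ and thereby shrinking both $F$ and $\nabla_i(H)$, and re-balance. Showing that this adaptive choice always terminates at a feasible index, with the max-over-$i$ bound guaranteeing the total degree at each step and the slack $\omega(n)$ simultaneously absorbing the $O(1)$ degree corrections from switching, is the step I expect to be the main obstacle.

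Finally I would carry out the embedding. Each dominating vertex has degree $n-o(n)$ and only $k=O(1)$ required adjacencies, so a bounded number of $2$-switches makes it adjacent to every vertex of the intended copy; since only $o(n)$ edges are touched and low-degree trade partners are abundant for $n$ large, no switch is obstructed. The $i$ selected medium vertices, having residual degree at least $\nabla_i(H)=\Delta(F)$, can likewise be switched so that they induce a supergraph of $F$, and because $H$ is required only as a subgraph the extra edges are harmless. Assembling the dominating vertices, playing $V(H)\setminus V(F)$, with the medium vertices, playing $V(F)$, then exhibits $H$ in the modified realization, proving $\pi$ potentially $H$-graphic and hence $\sigma(H,n)\le\widetilde{\sigma}(H)n+\omega(n)$ for all $n\ge N(\omega,H)$.
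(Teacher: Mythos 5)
Your proposal has the right overall shape---a bounded set of ``dominating'' high-degree vertices playing $V(H)\setminus V(F)$, an induced subgraph $F<H$ achieving $\nabla_i(H)$ placed on lower-degree vertices, and an adaptive choice of $i$ justified by the maximum in the definition of $\widetilde{\sigma}(H)$---but its load-bearing steps are asserted rather than proved, and one structural claim is false. The dichotomy at degree $n/2$ does not do what you need: a vertex of degree $n/2+1$ is not ``adjacent to all but $o(n)$ others,'' so it can neither serve as a dominating vertex nor be absorbed into the low part, and the sum hypothesis gives no control over how many vertices sit in this middle range. The paper's dichotomy is instead at $n-f(\alpha(H),k)$ for a constant $f$: either the maximum degree is at least this, in which case that one vertex is peeled off (re-realize via Kleitman--Wang so it is adjacent to the next $d_1$ highest-degree vertices, delete its few nonneighbors, lay off its term), or the maximum degree is below it, which is exactly the hypothesis of the Bounded Maximum Degree Theorem. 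Handling the peeling correctly forces the bookkeeping $\sigma(\pi_i)\ge(\widetilde{\sigma}(H)-2i)n_i+\omega(n_i)/2^i$: each peeled vertex lowers the effective threshold by $2$, and the inequality $\widetilde{\sigma}(H)\ge 2(k-i)+\nabla_i(H)-1$ for \emph{every} admissible $i$ is what keeps the residual sum adequate no matter where the process stops. None of this bookkeeping appears in your sketch.

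Moreover, you yourself flag the termination of the adaptive index selection as ``the main obstacle''; that step is the heart of the theorem, and leaving it unresolved leaves the proof incomplete. The paper settles it with a case analysis on $t$, the number of residual terms of size at least $k-\ell-1$, proving degree sufficiency for $K_t\vee F_{k-\ell-t}$ by comparing the sum lower bound against the sum forced by failure of degree sufficiency. Finally, your closing claim that vertices of residual degree at least $\Delta(F)$ ``can likewise be switched so that they induce a supergraph of $F$'' with ``no switch obstructed'' is precisely the statement of the paper's Bounded Maximum Degree Theorem, and it is not free: the obstruction is vertices outside the target set $S$ whose entire neighborhood lies inside $S$, and bounding their number requires the counting function $g(\alpha(H),k)$, a pigeonhole argument producing a common neighborhood $\widehat{S}$, and a four-edge exchange. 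Degree sufficiency plus ``abundance of trade partners'' does not by itself yield the embedding, so this step is a genuine gap rather than a routine verification.
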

   
The proof of Theorem \ref{theorem:PotErdosStone} relies on repeated use of the following theorem, which may be of independent interest.  Let $H$ be a graph, and let $(h_1,\dots,h_k)$ be the degree sequence of $H$.  A graphic sequence $\pi=(d_1,\dots,d_n)$ is {\it degree sufficient} for $H$ if $d_i\ge h_i$ for all $i\in\{1,\ldots,k\}$.

\begin{theorem}[The Bounded Maximum Degree Theorem]\label{theorem:BMDT}
Let $H$ be a graph of order $k$.
There exists a function $f=f(\alpha(H),k)$ such that, for sufficiently large $n$, a nonincreasing graphic sequence $\pi=(d_1,\dots,d_n)$ is potentially $H$-graphic provided that
\begin{cem}
\item $\pi$ is degree sufficient for $H$,
\item $d_n\ge k-\alpha(H)$, and
\item $d_1<n-f(\alpha(H),k)$.
\end{cem}
\end{theorem}

%

In Section \ref{section:Lemmas} we present several technical lemmas used in the proofs of Theorems \ref{theorem:PotErdosStone} and \ref{theorem:BMDT}.  In Section \ref{section:BoundMaxDeg} we prove the Bounded Maximum Degree Theorem, with the proof of Theorems \ref{theorem:LowerBound} and \ref{theorem:PotErdosStone} following in Section \ref{section:MainProof}.

\section{Technical Lemmas}\label{section:Lemmas}

We will need the following results from \cite{YL05} and \cite{YinSplit}.

\begin{theorem}[Yin and Li]\label{theorem:YL}
Let $\pi=(d_1,\dots,d_n)$ be a nonincreasing graphic sequence and let $k$ be a positive integer.  If $d_k\ge k-1$ and $d_i\ge 2(k-1)-i$ for all $i\in\{1,\ldots,k-1\}$, then $\pi$ is potentially $K_k$-graphic.
\end{theorem}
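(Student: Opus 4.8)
The plan is to prove the slightly stronger statement by induction on $k$: under the stated hypotheses there is a realization of $\pi$ in which the $k$ vertices of largest degree, say $v_1,\dots,v_k$ with $\deg v_i=d_i$, induce a copy of $K_k$. The base cases $k=1$ and $k=2$ are immediate, since $d_1\ge 1$ and $d_2\ge 1$ guarantee a realization in which $v_1$ and $v_2$ are adjacent. For the inductive step I would peel off the vertex $v_k$: because $d_k\ge k-1$, a standard Havel--Hakimi localization argument produces a realization of $\pi$ in which $N(v_k)$ is exactly the set of the $d_k$ highest-degree vertices other than $v_k$, and this set contains $v_1,\dots,v_{k-1}$. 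Deleting $v_k$ then leaves a Kleitman--Wang residual sequence $\pi^-$ on $n-1$ vertices, obtained by subtracting $1$ from the $d_k$ largest of the remaining degrees.

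The first thing to verify is that $\pi^-$ satisfies the hypotheses of the theorem with $k$ replaced by $k-1$. Here I would use the elementary reduction inequality that the $i$th largest term of $\pi^-$ is at least $d_{i+1}-1$. For $1\le i\le k-2$ this gives, via $d_{i+1}\ge 2(k-1)-(i+1)$, the bound $2(k-1)-(i+1)-1=2(k-2)-i$, which is exactly the required inequality for $k-1$. For the terminal condition I would note that the $(k-1)$st largest term of $\pi^-$ is at least $d_k-1\ge k-2$, as needed. Thus $\pi^-$ meets both hypotheses for $k-1$, and the inductive hypothesis supplies a realization of $\pi^-$ whose $k-1$ highest-degree vertices induce $K_{k-1}$.

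It then remains to reassemble: re-attaching $v_k$ to its prescribed neighborhood should complete the clique, since $v_k$ is adjacent to each vertex of the inductively produced $K_{k-1}$ precisely when that clique lies inside $N(v_k)$. Granting this, $\{v_k\}$ together with the $K_{k-1}$ forms the desired $K_k$ on the $k$ largest-degree vertices, and $\pi$ is potentially $K_k$-graphic.

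The main obstacle is exactly the localization in this last step, namely guaranteeing that the clique produced by induction on $\pi^-$ actually sits within $N(v_k)$. The difficulty arises from ties at the degree boundary: after the subtraction of $1$ from the top $d_k$ residual degrees, a vertex of equal original degree lying \emph{outside} $N(v_k)$ can overtake one of $v_1,\dots,v_{k-1}$ in sorted order, so that the ``$k-1$ highest-degree vertices'' of $\pi^-$ need not be neighbors of $v_k$. Resolving this requires tracking the sorted order carefully through the reduction and choosing $v_k$'s neighborhood among degree-tied vertices consistently with the ordering used by the induction, so that the inductive clique is forced into $N(v_k)$. Making this tie-breaking precise, together with verifying that the Havel--Hakimi localization can be carried out simultaneously with the required choice of neighbors, is where the real work of the argument lies.
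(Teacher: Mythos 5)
First, note that the paper offers no proof of this statement: it is imported verbatim from Yin and Li \cite{YL05}, so your attempt can only be judged on its own merits, and as you yourself concede, it is not complete. The difficulty you flag at the end is not a bookkeeping technicality about tie-breaking; it is the crux of the theorem, and within your peel-off-$v_k$ induction it cannot be repaired by any choice of $v_k$'s neighborhood among degree-tied vertices. Concretely, take $\pi=(4,3,3,3,3,3,3)$ and $k=3$, which satisfies the hypotheses. Laying off $d_3=3$ by Kleitman--Wang forces $N(v_3)$ to consist of vertices carrying the three largest remaining values $4,3,3$, so in the residual sequence $\pi^-=(3,3,3,3,2,2)$ the degree multiset of $N(v_3)$ is $\{3,2,2\}$ \emph{no matter which} tied vertices you select. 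Your strengthened inductive hypothesis places the $K_2$ on the two highest-degree vertices of $\pi^-$, which both have degree $3$; since $N(v_3)$ contains only one vertex of residual degree $3$, the inductive clique can never lie inside $N(v_3)$, and reattaching $v_k$ cannot complete the clique. The same obstruction appears if you instead peel off $d_1$ (e.g., $\pi=(3,2,2,2,2,2)$, $k=3$). So the ``localization'' you defer is a genuine missing idea, not residual work.

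What is actually needed to close the gap is a mechanism of a different kind: an edge-exchange (2-switch) argument that \emph{relocates} a clique onto prescribed high-degree vertices after the fact, rather than threading a location constraint through the induction. This is precisely the content of Yin's result quoted in the paper as Lemma~\ref{lemma:SplitGraph} (take $s=0$), and equivalently of the Rao-type characterization that $\pi$ is potentially $K_k$-graphic if and only if a suitable residual sequence is graphic; the published proof of the present theorem in \cite{YL05} runs through machinery of that sort (verifying graphicality of residual sequences via Erd\H{o}s--Gallai-type inequalities) rather than through your induction. Your arithmetic verifications are fine --- the $i$th largest term of $\pi^-$ is indeed at least $d_{i+1}-1$, which yields the hypotheses for $k-1$ --- but without a relocation lemma or a residual-sequence characterization the reassembly step fails, so the proposal as written does not constitute a proof.
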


We let $G\vee H$ denote the standard join of $G$ and $H$ and let $\overline{G}$ denote the complement of $G$.

\begin{lemma}[Yin]\label{lemma:SplitGraph}
If $\pi$ is a potentially $K_r\vee\overline{K_{s}}$-graphic sequence, then there is a realization of $\pi$ in which the vertices in the copy of $K_r$ are the $r$ vertices of highest degree and the vertices in the copy of $\overline{K_s}$ are the next $s$ vertices of highest degree.
\end{lemma}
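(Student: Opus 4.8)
The plan is to begin with an arbitrary realization of $\pi$ containing a copy of $H=K_r\vee\overline{K_s}$ and to push that copy onto the highest-degree vertices by repeatedly applying degree-preserving $2$-switches. Throughout I would exploit that $H$ need only appear as a subgraph, so the only constraints are that the $\binom{r}{2}$ clique edges and the $rs$ clique-to-independent edges be present; the pairs inside $\overline{K_s}$ impose nothing. Write the vertices as $v_1,\dots,v_n$ with $d(v_i)=d_i$ nonincreasing, and record a realization together with a copy of $H$ as a pair $(A,B)$, where $A$ is the set of $r$ vertices carrying the clique and $B$ the $s$ vertices carrying $\overline{K_s}$. Over the finite collection of all such triples $(G,A,B)$, I would select one that lexicographically minimizes first $\sum_{z\in A\cup B}\mathrm{index}(z)$ and then $\sum_{z\in A}\mathrm{index}(z)$, where $\mathrm{index}(v_i)=i$, and argue that any such minimizer satisfies $A\cup B=\{v_1,\dots,v_{r+s}\}$ and $A=\{v_1,\dots,v_r\}$.

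The engine is a single ``borrowing'' move: given vertices $p,q$ with $d(p)\ge d(q)$, a target set $T\subseteq N_G(q)$ with $p\notin T$, one can make $p$ adjacent to all of $T$ by $2$-switches while preserving every degree. For each $w\in T\setminus N_G(p)$ one has $qw\in E(G)$ and $pw\notin E(G)$, and the key counting fact, coming from $d(p)\ge d(q)$, is
\[ |N_G(p)\setminus(N_G(q)\cup\{q\})|-|N_G(q)\setminus(N_G(p)\cup\{p\})|=d(p)-d(q)\ge 0, \]
where the contribution of the pair $pq$ cancels regardless of whether $pq\in E(G)$. Hence there are at least $|T\setminus N_G(p)|$ donors $x\in N_G(p)\setminus(N_G(q)\cup\{q\})$, which I match injectively to the missing $w$'s; for each matched pair I perform the switch deleting $\{px,qw\}$ and adding $\{pw,qx\}$. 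After all switches $p$ is adjacent to all of $T$.

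In the first phase I take $q=v$ a copy vertex, $p=u$ a vertex outside the copy with $d(u)\ge d(v)$, and $T=N_H(v)$; then $u$ inherits $v$'s role on $(A\cup B\setminus\{v\})\cup\{u\}$. In the second phase I take $q=v_b$ a clique vertex, $p=v_a$ an independent vertex with $a<b$ (so $d(v_a)\ge d(v_b)$), and $T=B\setminus\{v_a\}$ (which lies in $N_G(v_b)$ since $v_b$ is adjacent to all of the copy); after the switches $v_a$ and $v_b$ can interchange clique and independent roles without changing the set $A\cup B$. The crucial verification, and the step I expect to be the main obstacle, is that these switches never delete a required edge of the resulting copy. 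The deleted edge $qw$ ceases to be required once $q$ leaves the clique, while the added $pw$ supplies exactly the adjacency $p$ needs; the delicate part is $px$ and $qx$, where one must check that the donor $x$ is harmless. In the first phase, when $v$ is a clique vertex it is adjacent to all of $(A\cup B)\setminus\{v\}$, forcing $x\notin A\cup B$, and when $v$ is an independent vertex both $u$ and any donor $x\in B$ end up in the independent part, so $ux$ is not a required edge; in the second phase $v_b$ is adjacent to the whole copy, again forcing $x$ outside it. This is precisely where the subgraph-versus-induced distinction and the protection of all other copy edges must be reconciled with the degree bookkeeping.

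With the move established, both phases close by minimality. If the chosen minimizer had some $v_j\in A\cup B$ and $v_i\notin A\cup B$ with $i<j$, the first application would relocate the copy off $v_j$ onto $v_i$ and strictly decrease $\sum_{A\cup B}\mathrm{index}$, a contradiction; hence $A\cup B=\{v_1,\dots,v_{r+s}\}$. If moreover $A\ne\{v_1,\dots,v_r\}$, there is an inversion $v_a\in B$, $v_b\in A$ with $a<b$, and the second application interchanges their roles—leaving $A\cup B$ fixed—while strictly decreasing $\sum_{A}\mathrm{index}$, again a contradiction. Therefore the minimizer has the clique on the $r$ highest-degree vertices and $\overline{K_s}$ on the next $s$, as claimed. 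The degenerate cases $r=0$ and $s=0$ are covered as well, the former trivially and the latter using only the first phase.
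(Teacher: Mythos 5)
Your proof is correct, but note that the paper itself offers nothing to compare it against: Lemma~\ref{lemma:SplitGraph} is imported from Yin~\cite{YinSplit} and stated without proof, so the comparison is with the literature rather than with an internal argument. Your two-phase scheme (first pull the vertex set $A\cup B$ of the copy down to the $r+s$ lowest indices, then resolve clique/independent-set inversions within that window) together with the ``borrowing'' engine is exactly the classical 2-switch technique used by Gould, Jacobson, and Lehel for $K_k$ and adapted by Yin to split graphs, so you have in effect reconstructed the known proof rather than found a new route. The details all check out: the counting identity $|N(p)\setminus(N(q)\cup\{q\})|-|N(q)\setminus(N(p)\cup\{p\})|=d(p)-d(q)$ is right (the indicators $[q\in N(p)]$ and $[p\in N(q)]$ cancel), the donor and target sets are automatically disjoint (donors avoid $N(q)$, targets lie in $N(q)$), and since your injective matching uses pairwise distinct vertices, each 2-switch remains legitimate after the earlier ones are performed --- a point worth stating explicitly, since the donors were certified in the original graph. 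You also correctly identified and handled the one genuinely delicate case, a donor $x\in B$ when the displaced vertex $v$ lies in $\overline{K_s}$: the deleted edge $ux$ joins two vertices of the new independent part and so is not required, which is where the subgraph (as opposed to induced subgraph) hypothesis does real work. One presentational quibble: the lexicographic minimization over $\bigl(\sum_{z\in A\cup B}\mathrm{index}(z),\ \sum_{z\in A}\mathrm{index}(z)\bigr)$ silently fixes a tie-breaking order among vertices of equal degree, which is fine --- the lemma's conclusion is insensitive to how ties are broken --- but deserves a sentence.
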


We now present a classical result of Kleitman and Wang \cite{KW} that generalizes the Havel-Hakimi algorithm \cite{hak,hav}.

\begin{theorem}[Kleitman and Wang]\label{theorem:KleitmanWang}
Let $\pi=(d_1,\dots,d_n)$ be a nonincreasing sequence of nonnegative integers.
If $\pi_{(i)}$ is the sequence defined by
$$\pi_{(i)}=\begin{cases}(d_1-1,\dots, d_{d_i}-1,d_{d_i+1},\dots,d_{i-1},d_{i+1},\dots,d_n) & \text{if}~d_i<i\\ (d_1-1,\dots,d_{i-1}-1,d_{i+1}-1,\dots,d_{d_i+1}-1,d_{d_i+2},\dots,d_n) & \text{if}~d_i\ge i, \end{cases}$$
then $\pi$ is graphic if and only if $\pi_{(i)}$ is graphic.
\end{theorem}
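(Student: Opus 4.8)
The plan is to prove the two implications separately, with the reverse direction (if $\pi_i$ is graphic then $\pi$ is graphic) being the routine one and the forward direction being where the real work lies. For the reverse direction, I would first observe that in both cases of the definition the sequence $\pi_i$ is obtained from $\pi$ by deleting the term $d_i$ and decreasing by one the $d_i$ largest of the surviving terms; the two displayed cases merely record, through explicit indexing, whether position $i$ itself falls among those $d_i$ largest positions (the case $d_i\ge i$) or below them (the case $d_i<i$). Thus, given a realization $G'$ of $\pi_i$ on a vertex set $V'$, I would reintroduce a new vertex $v$ and join it precisely to the $d_i$ vertices whose degrees were decreased. Each such vertex returns to its original degree $d_j$, the vertex $v$ acquires degree $d_i$, and all remaining degrees are untouched, so the resulting graph realizes $\pi$.

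For the forward direction I would start from any realization $G$ of $\pi$ and let $v_i$ be a vertex of degree $d_i$. Fixing an ordering of $V(G)\setminus\{v_i\}$ by nonincreasing degree (ties broken consistently), let $A$ denote the set of the first $d_i$ vertices in this ordering; these are exactly the $d_i$ vertices whose degrees $\pi_i$ decreases. The goal is to produce, by degree-preserving $2$-switches, a realization in which $N(v_i)=A$: deleting $v_i$ from such a realization leaves a graph whose degree sequence is precisely $\pi_i$, establishing that $\pi_i$ is graphic.

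The heart of the argument, and the step I expect to be the main obstacle, is showing that whenever $N(v_i)\neq A$ a suitable $2$-switch exists that strictly increases $|N(v_i)\cap A|$. Since $|N(v_i)|=|A|=d_i$, there exist $u\in A\setminus N(v_i)$ and $w\in N(v_i)\setminus A$, and because $A$ collects the largest degrees we have $d(u)\ge d(w)$. The key observation is that $v_i\in N(w)\setminus(N(u)\cup\{u\})$, so this set is nonempty; comparing cardinalities yields $|N(u)\setminus(N(w)\cup\{w\})| - |N(w)\setminus(N(u)\cup\{u\})| = d(u)-d(w) \ge 0$, whence $N(u)\setminus(N(w)\cup\{w\})$ contains some vertex $x$. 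This $x$ is adjacent to $u$, nonadjacent to $w$, and distinct from $v_i$ (as $x\in N(u)$ while $v_i\notin N(u)$). Replacing the edges $v_i w$ and $u x$ by $v_i u$ and $w x$ is then a valid $2$-switch among four distinct vertices: it preserves every degree while exchanging the neighbor $w\notin A$ of $v_i$ for the neighbor $u\in A$.

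Finally, I would package these swaps as a terminating monovariant argument: since the swap alters only the edges on $\{v_i,w,u,x\}$, the set $N(v_i)$ changes exactly by losing $w$ and gaining $u$, so $|N(v_i)\cap A|$ strictly increases by one. As this quantity is bounded above by $d_i$, after finitely many swaps we reach a realization with $N(v_i)=A$. Deleting $v_i$ then yields a realization of $\pi_i$, completing the forward direction and hence the equivalence. I would also remark that this subsumes the Havel--Hakimi theorem as the special case $i=1$, where $A$ consists of the $d_1$ highest-degree vertices.
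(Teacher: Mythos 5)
Your proof is correct: the reverse direction (adding a vertex back), the identity $|N(u)\setminus(N(w)\cup\{w\})| - |N(w)\setminus(N(u)\cup\{u\})| = d(u)-d(w)$, the observation that $v_i$ witnesses nonemptiness of the second set, the validity of the $2$-switch on four distinct vertices, and the monovariant $|N(v_i)\cap A|$ all check out, and your reading of the two displayed cases as a single ``decrement the $d_i$ largest surviving terms'' operation is the right one. The paper itself gives no proof of this statement --- it is quoted as a classical result of Kleitman and Wang \cite{KW} --- and your argument is essentially the standard one from the literature, the same $2$-switch technique used for Havel--Hakimi, so there is no divergence to report.
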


The process of removing a term from a graphic sequence as described in the Kleitman-Wang algorithm is referred to as {\it laying off} the term $d_i$ from $\pi$, and the sequence $\pi_i$ obtained is frequently called a \textit{residual} sequence.

The next lemma, the proof of which is essentially identical to that of the necessity of the Havel-Hakimi and Kleitman-Wang algorithms, allows us to use the structure of a realization of $\pi$ to choose the term that we wish to lay off in the Kleitman-Wang algorithm.

\begin{lemma}\label{lemma:delete_vertex}
Let $G$ be a graph with degree sequence $\pi$ and let $v\in V(G)$.  If $\pi(G-v)$ is potentially $H$-graphic, then $\pi$ is potentially $H$-graphic.  In particular, if $\pi_i$ is any residual sequence obtained from $\pi$ via the Kleitman-Wang algorithm and $\pi_i$ is potentially $H$-graphic, then $\pi$ is potentially $H$-graphic.  
\end{lemma} 

\begin{proof}
Let $\pi=(d_1,\dots,d_n)$ and let $V(G)=\{v_1,\dots,v_n\}$ such that $d(v_i)=d_i$.
Assume that $\pi_i$ is potentially $H$-graphic and let $G'$ be a realization of $\pi(G-v_i)$ that contains $H$ as a subgraph.
If $d_i<i$, obtain a realization of $\pi$ containing $H$ by joining $v_i$ to vertices with degrees $d_1-1,\ldots,d_{d_{i}}-1$ in $G'$.
If $d_i\ge i$, obtain a realization of $\pi$ containing $H$ by joining $v_i$ to vertices with degrees $d_1-1,\ldots,d_{i-1}-1,d_{i+1}-1,\ldots,d_{d_{i}+1}-1$ in $G'$.
\end{proof}

In the same spirit, we also give the following useful lemma.  As the proof is straightforward, we omit it here.  For a graph $G$ and an integer $i$ with $i\in\{0,\ldots,|V(G)|\}$, let ${\mathcal D}^{(i)}(G)$ denote the family of graphs obtained by deleting exactly $i$ vertices from $G$ or, in other words, the family of induced subgraphs of $G$ with order $|V(G)|-i$.  

\begin{lemma}\label{lemma:conical}
If $\pi=(n-1,d_2,\dots,d_n)$ is a nonincreasing graphic sequence, then $\pi$ is potentially $H$-graphic if and only if $\pi_1=(d_2-1,\dots,d_n-1)$ is potentially $H'$-graphic for some $H'$ in ${\mathcal D^{(1)}}$.  
\end{lemma}

Iteratively applying the Kleitman-Wang algorithm yields the following, which generalizes related lemmas from \cite{CFGS, F}.

\begin{lemma}\label{lemma:KleitmanWangIt}
Let $m$ and $k$ be positive integers and let $\omega=\omega(n)$ be an increasing function that tends to infinity with $n$.
There exists $N = N(m,k,\omega)$ such that for all $n\ge N$, if $\pi$ is an $n$-term graphic sequence such that $\sigma(\pi)\ge mn+\omega(n)$, then
\begin{cem}
\item $\pi$ is potentially $K_k$-graphic, or
\item iteratively applying the Kleitman-Wang algorithm by laying off the minimum term in each successive sequence will eventually yield a graphic sequence $\pi'$ with $n'$ terms satisfying the following properties: (a) $\sigma(\pi')\ge mn'+\omega(n')$,  (b) the minimum term in $\pi'$ is greater than $m/2$, and (c) $n'\ge \frac{\omega(n)}{2(k-2)-m}$.
\end{cem}

\end{lemma}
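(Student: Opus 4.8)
The plan is to run the Kleitman--Wang algorithm, repeatedly laying off the current minimum term, while monitoring the single quantity $\sigma(\rho)-m\cdot t(\rho)$, where $t(\rho)$ denotes the number of terms of $\rho$. First I would record the mechanics of a lay-off. Laying off the minimum term $\mu$ of a nonincreasing sequence falls into the case $d_i<i$ of Theorem~\ref{theorem:KleitmanWang} (the last index $i$ always satisfies $d_i\le i-1<i$), so it deletes one term and decrements the largest $\mu$ of the remaining terms; hence $t$ drops by $1$ and $\sigma$ drops by exactly $2\mu$. Consequently, \emph{each lay-off of a term of size at most $m/2$ does not decrease} $\sigma(\rho)-m\cdot t(\rho)$. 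Since this quantity starts at $\sigma(\pi)-mn\ge\omega(n)$ and $\omega$ is increasing, any sequence $\rho$ reached by laying off only terms of size at most $m/2$ has $\sigma(\rho)\ge m\,t(\rho)+\omega(n)\ge m\,t(\rho)+\omega(t(\rho))$, which is precisely property~(a).

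The second ingredient is a lifting fact that lets me move a copy of $K_k$ back up the chain of lay-offs: if any sequence obtained from $\pi$ by successive lay-offs is potentially $K_k$-graphic, then so is $\pi$. Indeed, a realization of a laid-off sequence extends to a realization of its predecessor by reintroducing the deleted vertex joined to the vertices whose degrees were decremented, an operation that only adds vertices and edges, so a copy of $K_k$ survives. Thus it suffices to produce \emph{either} a sequence satisfying alternative~2 \emph{or} some lay-off of $\pi$ that is potentially $K_k$-graphic.

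Now for the dichotomy. If $m\ge 2(k-2)$, then for $n$ large $\sigma(\pi)\ge mn+\omega(n)>2(k-2)n\ge\sigma(K_k,n)$, using the established value $\sigma(K_k,n)=(k-2)(2n-k+1)+2$ (the cases $k\le 2$ being immediate), so $\pi$ is already potentially $K_k$-graphic and alternative~1 holds. Assume therefore $m<2(k-2)$ and set $s=\lfloor \omega(n)/(2(k-2)-m)\rfloor$, choosing $N$ so that $s\ge\binom{k}{2}+3$ for all $n\ge N$, which is possible since $\omega\to\infty$. Lay off the minimum term as long as the number of terms exceeds $s$ and the current minimum is at most $m/2$. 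By the first paragraph, property~(a) is maintained up to the moment this loop halts. If it halts because the minimum first exceeds $m/2$ at some count $n'>s$, then $n'\ge s+1>\omega(n)/(2(k-2)-m)$, so (a), (b) and (c) all hold and I output that sequence as alternative~2. If instead it halts because the count reaches $s$, then the minimum stayed at most $m/2$ throughout, so (a) holds at $s$ terms; combining $\sigma\ge ms+\omega(n)$ with $s\le \omega(n)/(2(k-2)-m)$ gives $\sigma\ge 2(k-2)s\ge\sigma(K_k,s)$, and since $s\ge\binom{k}{2}+3$ this sequence is potentially $K_k$-graphic, whence $\pi$ is too and alternative~1 holds.

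I expect the main obstacle to be the bookkeeping that forces the two stopping thresholds to agree, and this is exactly where the constant $2(k-2)$ in property~(c) is pinned down: the count at which property~(a) drives the average degree up to $2(k-2)$ is precisely $\omega(n)/(2(k-2)-m)$, and $2(k-2)$ is the asymptotic density $\sigma(K_k,s)\sim 2(k-2)s$ of the clique. Any other density (for instance the larger constant one would obtain from applying Theorem~\ref{theorem:YL} directly) would produce a threshold that does not line up with (c), so the argument genuinely relies on the sharp value of $\sigma(K_k,\cdot)$ together with the invariant and the lifting fact; verifying that these mesh across both halting rules, and that $s$ is large enough for the clique result to apply, is the delicate part, while the per-step sum accounting and the vertex-reinsertion are routine.
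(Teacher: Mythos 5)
Your proof is correct and follows essentially the same route as the paper's: lay off minimum terms while tracking the invariant $\sigma-m\cdot t$ (the paper phrases this as $\sigma(\pi_{i+1})\ge\sigma(\pi_i)-m$), use the exact value $\sigma(K_k,s)=2(k-2)s-(k-1)(k-2)+2$ to conclude potential $K_k$-graphicness once the term count drops to roughly $\omega(n)/(2(k-2)-m)$, and lift the clique back to $\pi$ via the Kleitman--Wang reinsertion. The only differences are organizational (an explicit two-way halting loop and a spelled-out justification of the lifting fact), not mathematical.
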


\begin{proof}
First observe that if $k\le 2$, then the result is trivial, so we assume that $k\ge 3$.
Let $\pi = \pi_0$.
If the minimum term of $\pi_i$ is greater than $m/2$, let $\pi'=\pi_i$.
Otherwise, obtain the $n-i-1$-term graphic sequence $\pi_{i+1}$ by applying the Kleitman-Wang algorithm to lay off a minimum term from $\pi_{i}$ and then sorting the resulting sequence in nonincreasing order.
Note that always $\sigma(\pi_{i+1})\ge \sigma(\pi_{i})-m$, and by induction we have $\sigma(\pi_{i+1})\ge m(n-i-1)+\omega(n)$.

By Lemma~\ref{lemma:delete_vertex}, if $\pi_i$ is potentially $K_k$-graphic, then $\pi_0$ is potentially $K_k$-graphic.
By the results of \cite{EJL}, \cite{GJL} and \cite{LiSong2}, \cite{LiXia}, and \cite{LiSongLuo}, we know that $\sigma(K_k,n) = 2(k-2)n-(k-1)(k-2)+2$ for $n\ge {k\choose 2}+3$.
Thus, if $\sigma(\pi_i)\ge 2(k-2)(n-i)+2$ for some $i\ge 0$, then $\pi$ is potentially $K_k$-graphic.
Considering the case when $i=0$, we may assume that $m<2(k-2)$.
If $i\ge n-\frac{\omega(n)}{2(k-2)-m}$, then
\begin{align*}
\sigma(\pi_i)-2(k-2)(n-i)&\ge (m-2(k-2))(n-i)+ \omega(n)\\
&\ge (m-2(k-2))\left(\frac{\omega(n)}{2(k-2)-m}\right)+\omega(n)\\
&\ge 0.
\end{align*}
Therefore, if $i\ge n-\frac{\omega(n)}{2(k-2)-m}$ and $\frac{\omega(n)}{2(k-2)-m}\ge \binom{k}{2}+3$, then $\pi_i$ is potentially $K_k$-graphic and consequently $\pi$ is potentially $K_k$-graphic.

It now follows that if $\pi$ is not potentially $K_k$-graphic, then the process of applying the Kleitman-Wang algorithm must output $\pi' = \pi_i$ for some $i<n-\frac{\omega(n)}{2(k-2)-m}$.
Since $\omega$ is increasing and $n'\le n$, it follows that $\sigma(\pi')\ge mn'+\omega(n')$.
\end{proof}

As demonstrated in Lemma \ref{lemma:KleitmanWangIt}, repeatedly laying off minimum terms from a sequence via the Kleitman-Wang algorithm can allow us to obtain a residual sequence that is denser than $\pi$ and also has a large minimum degree, which may facilitate the construction of a realization that contains $H$.  At times, we may instead wish to delete vertices from specific realizations of $\pi$.

Finally, for completeness, we state the classic characterization of graphic sequences due to Erd\H{o}s and Gallai \cite{EG}, which will be useful in the proof of Theorem \ref{theorem:PotErdosStone}.

\begin{theorem}[The Erd\H{o}s-Gallai Graphicality Criteria]\label{theorem:EG}
If $\pi=(d_1,\dots,d_n)$ is a nonincreasing sequence of nonnegative integers, then $\pi$ is graphic if and only if $\pi$ has even sum and $$\sum_{j=1}^{t}d_j\le t(t-1) + \sum_{j=t+1}^n\min\{d_j, t\}$$ for all $t\in\{1,\ldots,n-1\}$.  
\end{theorem}

\section{Proof of the Bounded Maximum Degree Theorem}\label{section:BoundMaxDeg}

In this section, we prove the Bounded Maximum Degree Theorem (Theorem~\ref{theorem:BMDT}).   Both here and in the proof of Theorem \ref{theorem:PotErdosStone}, we demonstrate the existence of a realization of $\pi$ that either contains $H$ or some supergraph of $H$, for instance $K_k$ or $K_{k-\alpha(H)}\vee \overline{K}_{\alpha(H)}$.  

Let $e_1=u_1v_1$ and $e_2=u_2v_2$ be edges in a graph $G$ such that $u_1u_2$ and $v_1v_2$ are not in $E(G)$.  Removing $e_1$ and $e_2$ from $G$ and replacing them with the edges $u_1u_2$ and $v_1v_2$ results in a graph with the same degree sequence as $G$.  This operation is called a {\it 2-switch}, but throughout the literature on degree sequences has also been referred to as a \textit{swap}, \textit{rewiring} or \textit{infusion}.  In \cite{P}, Petersen showed that, given realizations $G_1$ and $G_2$ of a graphic sequence $\pi$, $G_1$ can be obtained from $G_2$ via a sequence of 2-switches.  Both here and in Section \ref{section:MainProof}, we will utilize both 2-switches and more general edge-exchange operations.
Let $C$ be a list of vertices $C=v_0,v_1,\ldots,v_t$ with $v_t=v_0$ in a graph $G$ such that $v_{i-1}v_i \in E(G)$ if and only if $v_iv_{i+1}\notin E(G)$.
Observe that removing the edges in $E(C)\cap E(G)$ from $G$ and adding the edges in $E(C)\cap E(\overline{G})$ to $G$ preserves the degree sequence of $G$. 
\begin{proof}

Let $V(H)=\{u_1,\ldots,u_k\}$, indexed such that $d_H(u_i)\geq d_H(u_j)$ when $i\leq j$.
Let us assume that $\pi$ satisfies the hypothesis of the theorem, but is not potentially $H$-graphic.  In a realization $G$ of $\pi$, let $S=\{v_1,\ldots,v_k\}$ be a set of vertices such that $d_G(v_i)=d_i$ and let $H_S$ be the graph with vertex set $S$ in which two vertices $v_i,v_j$ are adjacent if and only $u_iu_j\in E(H)$.  If all edges of $H_S$ are edges of $G$, then $H_S$ is a subgraph of $G$ isomorphic to $H$ and $\pi$ is potentially $H$-graphic.  Hence, let us assume that $G$ is a realization which maximizes $|E(G)\cap E(H_S)|$, but that the edge $v_iv_j\in E(H_S)$ while $v_iv_j\not\in E(G)$.  Now, because $d_G(v_i)\geq d_H(u_i)=d_{H_S}(v_i)$ and $d_G(v_j)\geq d_H(u_j)=d_{H_S}(v_j)$, there must exist (not necessarily distinct) vertices $a_i$ and $a_j$ such that $v_ia_i,v_ja_j\in E(G)$, while $v_ia_i,v_ja_j\not\in E(H_S)$.  

We begin by showing that many vertices of $V(G)-S$ have neighbors in $V(G)-S$.  Specifically, we claim that there are at most $$g(\alpha(H),k) = {k\choose k-\alpha(H)}\left[2{k-\alpha(H)\choose 2}+\alpha(H)-1\right] $$ vertices $w$ in $V(G)-S$ such that $N(w)\subseteq S$.

Indeed, let us assume that there are at least $g(\alpha(H),k)+1$ vertices $w$ in $V(G)-S$ such that $N(w)\subseteq S$.  By the hypothesis of the theorem, each vertex $w\in V(G)$ satisfies $d_G(w)\geq k-\alpha(H)$.  For each vertex $w$ such that $N(w)\subseteq S$, let $S_w$ be a set of $k-\alpha(H)$ vertices such that $S_w\subseteq N(w)$.  Let $\ell=2{k-\alpha(H)\choose 2}+\alpha(H)$.  By the pigeonhole principle, there exists an independent set $\{w_1,\ldots,w_\ell\}$ of vertices and a $k-\alpha(H)$ element subset $\widehat{S}$ of $S$ such that $S_{w_i}=\widehat{S}$ for each $1\le i\le\ell$.    

Let $\mathcal{P}$ be a family of ${k-\alpha(H)\choose 2}$ disjoint pairs of vertices from $\{w_1,\ldots,w_{\ell-\alpha(H)}\}$, and to each pair $\{v_r,v_s\}$ of vertices in $\widehat{S}$ associate a distinct pair $P(r,s)\in\mathcal{P}$.  If $v_rv_s$ is not an edge in $G$ and we suppose $P(r,s)=\{w_{r'}.w_{s'}\}$, then we perform the 2-switch that replaces the edges $v_rw_{r'}$ and $v_sw_{s'}$ with the non-edges $v_rv_s$ and $w_{r'}w_{s'}$.  In this way arrive at a realization of $\pi$ in which the graph induced by $\widehat{S}$ is complete, while maintaining the property that the vertices of $\widehat{S}$ are joined to $\{w_{\ell-\alpha(H)+1}.\ldots,w_\ell\}$.  This produces a realization of $\pi$ that contains $K_{k-\alpha(H)}\vee \overline{K_{\alpha(H)}}$ as a subgraph, contradicting the assumption that $\pi$ is not potentially $H$-graphic.  Consequently, we may assume that there are at most $g(\alpha(H),k)$ vertices $w$ in $V(G)-S$ such that $N(w)\subseteq S$.

We will now use the fact that many vertices of $V(G)-S$ have neighbors in $V(G)-S$ to exhibit an edge-exchange that inserts the edge $v_iv_j$ into $G$ at the expense of the edges $v_ia_i$ and $v_ja_j$ while preserving each edge in $H_S$.  Let $$f(\alpha(H),k)=[g(\alpha(H),k)+4k^2]+k+1,$$ and let $d_1\leq n-1-f(\alpha(H),k)$.  If we let $$X_i=\{v\in (V(G)-S)-N_{G-S}(a_i)~:~d_{G-S}(v)>0\}$$ and $$X_j=\{v\in (V(G)-S)-N_{G-S}(a_j)~:~d_{G-S}(v)>0\},$$ then, as at most $g(\alpha,k)$ vertices have their neighborhoods entirely contained in $S$, it follows that $$|X_i|, |X_j|\ge 4k^2.$$
Let $Y_i = N_{G-S}(X_i)$ and $Y_j=N_{G-S}(X_j)$.

By assumption, $\pi$ is not potentially $H$-graphic, and is therefore not potentially $K_k$-graphic.  As such, Theorem~\ref{theorem:YL} implies that each vertex in $G-S$ has degree at most $2k-4<2k$.
Let $y_i$ be a vertex in $Y_i$, and let $x_i$ be a neighbor of $y_i$ in $X_i$.
There are at least $4k^2-1$ vertices in $X_j$ that are not $x_i$, and at least $4k^2-1-(2k-5)$ of these vertices have a neighbor in $Y_j$ that is not $y_i$.
Since vertices in $V(G)-S$ have degree at most $2k-4$, there are more than $(4k^2-2k+4)/(2k) = 2k-1+2/k$ vertices in $Y_j$ that are not $y_i$ and furthermore have a neighbor in $X_j$ that is not $x_i$.
Since $d(y_i)<2k$, there exists a vertex $y_j$, distinct from $y_i$ such that $y_iy_j\notin E(G)$ and $y_j$ has a neighbor $x_j\in X_j$ that is distinct from $x_i$.
In this case, exchanging the edges $v_ia_i,v_ja_j, x_iy_i$ and $x_jy_j$ for the nonedges $v_iv_j, a_ix_i,a_jx_j$ and $y_iy_j$ yields a realization of $\pi$ that contradicts the maximality of $G$.
\end{proof}

\section{Proof of Theorem \ref{theorem:PotErdosStone}}\label{section:MainProof}

The proof of Theorem \ref{theorem:PotErdosStone} proceeds in two stages, which
we briefly outline here to give a clearer picture of the structure and philosophy of the proof.  Let $n$ be sufficiently large, and let $\pi$ be an $n$-term graphic sequence such that $\sigma(\pi)\ge \st(H)n+\omega(n)$.  Most simply stated, we prove that $\pi$ has a realization in which a set of $\ell$ vertices is completely joined to a large subgraph whose degree sequence is potentially $\mathcal D^{(\ell)}(H)$-graphic, for some $\ell\in\{0,\ldots,k-\alpha(H)\}$.

The first stage of the proof generates a residual sequence $\pi_{\ell}$ through $\ell$ alternating applications of Lemma~\ref{lemma:KleitmanWangIt} and (the contrapositive of) Theorem~\ref{theorem:BMDT}.
The $i$th application of Lemma~\ref{lemma:KleitmanWangIt} raises the average of the terms in the sequence, making the sequence denser and therefore more likely to be potentially $\mathcal D^{(i)}(H)$-graphic.
We call this sequence $\pi_i$.

If $\pi_i$ is not potentially $\mathcal D^{(i)}(H)$-graphic, then the contrapositive of Theorem~\ref{theorem:BMDT} implies that the leading term of $\pi_i$ is very large.
Hence, in any realization $G$ of $\pi_i$ there is a vertex $v$ that is adjacent to nearly all other vertices in $G$.
While the leading term of $\pi_i$ prevents the direct application of Theorem~\ref{theorem:BMDT} to build an element of $\mathcal D^{(i)}(H)$, in principle the corresponding vertex of high degree should be beneficial to the construction of an element of $\mathcal D^{(i)}(H)$.
We then lay off the leading term of $\pi_i$, and given its utility, consider it to be ``reserved" for the conclusion of the proof.

After appropriate conditions are met, Stage 1 terminates with $\pi_{\ell}$.  Stage 2 of the proof then uses edge exchanges to prove that $\pi_{\ell}$ is potentially $\mathcal D^{(\ell)}(H)$-graphic.  Since Stage 1 was iterated $\ell$ times, we have laid off and reserved $\ell$ large terms.  Reuniting these $\ell$ terms with a realization of $\pi_\ell$ that contains a subgraph from $D^{(\ell)}(H)$ allows us to create a realization of $\pi$ that contains $H$ as a subgraph, completing the proof.  

We are now ready to give the proof of Theorem \ref{theorem:PotErdosStone} in full detail.  

\begin{proof}
To begin, let $\pi$ be an $n$-term graphic sequence with $n>N(\omega, H)$ such that $\sigma(\pi)\ge \st(H)n+\omega(n)$.  It suffices to show that $\pi$ is potentially $H$-graphic.
If $\pi$ is potentially $K_k$-graphic, then the result is trivial.
Therefore, by Theorem~\ref{theorem:YL}, we may assume that $d_{k+1}\le 2k-4$.
Furthermore, throughout the proof we may assume that when applying Lemma~\ref{lemma:KleitmanWangIt} to $\pi$ or a residual sequence obtained from $\pi$, the resulting sequence satisfies Conclusion (2) of that lemma.

To initialize Stage 1, apply Lemma~\ref{lemma:KleitmanWangIt} to $\pi$ to obtain an $n_0$-term graphic sequence $\pi_0=(d_1^{(0)},\ldots,d_{n_0}^{(0)})$ with minimum entry at least $\st(H)/2$ and $\sigma(\pi_0)\ge \st(H)n_0+\omega(n_0)$.  
Starting with this $\pi_0$, we iteratively construct sequences $\pi_i=(d_1^{(i)},\dots,d_{n_i}^{(i)})$ for $i\ge 1$.
Each $\pi_i$, which we assume to be nonincreasing, will satisfy the following conditions: 

\begin{enumerate}
\item[(a)]  $d_{n_i}^{(i)}\ge k-i-\alpha(H)$, and
\item[(b)] if $\pi_i$ is potentially ${\mathcal D^{(i)}(H)}$-graphic, then $\pi$ is potentially $H$-graphic.  
\end{enumerate}

Note that Condition (a) ensures that $\pi_i$ satisfies Condition 2 of Theorem~\ref{theorem:BMDT} for any $(k-i)$-vertex graph $H_i$ with $\alpha(H_i)\ge\alpha(H)$.

If $i=k-\alpha(H)$ or if $d_1^{(i)}<n_i-\binom{k}{\lceil k/2\rceil}(k^2-1)$, then terminate Stage $1$ and proceed to Stage $2$ with $\ell=i$.
Otherwise, $d_1^{(i)}\ge n_i-\binom{k}{\lceil k/2\rceil}(k^2-1)$, and we obtain $\pi_{i+1}$ via the following steps;

\begin{enumerate}

\item Let $\widehat G_i$ be a realization of $\pi_i$ such that a vertex of degree $d_1^{(i)}$, which we call $v_1^{(i)}$, is adjacent to the next $d_1^{(i)}$ vertices of highest degree; such a realization exists following the proof of the sufficiency of Theorem~\ref{theorem:KleitmanWang}.
From $\widehat G_i$, obtain the graph $\widehat G_i'$ by deleting the nonneighbors of $v_1^{(i)}$ and let $\pi_i'=\pi(\widehat G_i')$.
If we let $n_i'$ denote the number of terms in $\pi_i'$, then the largest term in $\pi_i'$ is $n_i'-1$.  

\item Apply Lemma~\ref{lemma:KleitmanWangIt} to $\pi_i'$, and let $\pi_i''$ be the graphic sequence from Conclusion (2) of that lemma.
The minimum term of $\pi_i''$ is at least $k-i-\alpha(H)$.
Let $n_i''$ be the number of terms in $\pi_i''$; the largest term in $\pi_i''$ will be $n_i''-1$.  

\item Obtain $\pi_{i+1}$ by laying off the largest term in $\pi_i''$. 

\end{enumerate}

Note that even though Lemma \ref{lemma:KleitmanWangIt} may require laying off a large fraction of the terms in $\pi_i'$, we still have that $n_{i+1}\to\infty$ provided $n_i\to\infty$.  Thus we may assume that $n_{i+1}$ is sufficiently large.

We claim that $\sigma(\pi_i)\ge (\st(H)-2i)n_i+\omega(n_i)/2^i$ and the minimum term in $\pi_i$ is at least $\st(H)/2-i\ge k-i-\alpha(H)$ for all $i\in\{0\ldots,\ell\}$.  We use induction on $i$.
First observe that $\sigma(\pi_0)\ge \st(H)n_0+\omega(n_0)$ and (as we assume that Conclusion (2) results from all applications of Lemma~\ref{lemma:KleitmanWangIt}) the minimum term of $\pi_0$ is at least $\st(H)/2\ge k-\alpha(H)$.
Let
$$M=2\binom{k}{\lceil k/2\rceil}(k^2-1)(2k-4).$$
Creating $\pi_i'$ entails deleting at most $\binom{k}{\lceil k/2\rceil}(k^2-1)$ vertices each with degree at most $2k-4$, and consequently, by induction,
$$\sigma(\pi_i') \ge \sigma(\pi_i)-M\ge (\st(H)-2i)n_i+\omega(n_i)/2^i-M.$$
By assumption, $n_i$ is sufficiently large, so we have $\omega(n_i)/2^i\ge 2M$, and therefore $\sigma(\pi_i') \ge (\st(H)-2i)n_i+\omega(n_i)/2^{i+1}$.
We next apply Lemma~\ref{lemma:KleitmanWangIt} to $\pi_1'$ to obtain a sequence $\pi_i''$ with $n_i''$ terms.
It follows that $$\sigma(\pi_i'')\ge (\st(H)-2i)n_i''+\omega(n_i'')/2^{i+1}$$ and we also have that the minimum term in $\pi_i''$ is at least $k-i-\alpha(H)$.  
The maximum term in $\pi_i''$ is $n_i''-1$, and we lay off this term to obtain $\pi_{i+1}$.  This yields
\begin{align*}
\sigma(\pi_{i+1})&\ge (\st(H)-2i)n_i''+\omega(n_i'')/2^{i+1}-2(n_i''-1)\\
&\ge (\st(H)-2(i+1))n_{i+1}+\omega(n_{i+1})/2^{i+1}.
\end{align*}
Also, as the minimum term in $\pi_i''$ is at least $k-i-\alpha(H)$ and decreases by $1$ when the maximum term is laid off, we have that $\pi_{i+1}$ satisfies Condition (a).  

Next we show that Condition (b) holds, also via induction on $i$.
If we let ${\mathcal D}^{(0)}(H)=\{H\}$, then the claim holds for $i=0$.
For some $i<\ell$, assume that if $\pi_i$ is potentially $\mathcal D^{(i)}(H)$-graphic, then $\pi$ is potentially $H$-graphic.
By Lemma \ref{lemma:conical}, $\pi_{i+1}$ is potentially ${\mathcal D}^{(i+1)}(H)$-graphic if and only if $\pi_i''$ is potentially ${\mathcal D}^{(i)}(H)$-graphic.
By Lemma \ref{lemma:delete_vertex}, if $\pi_i''$ is potentially ${\mathcal D}^{(i)}(H)$-graphic, then $\pi_i$ is potentially ${\mathcal D}^{(i)}(H)$-graphic.
Therefore, by induction, if $\pi_{i+1}$ is potentially $\mathcal D^{(i+1)}(H)$-graphic, then $\pi$ is potentially $H$-graphic, so $\pi_{i+1}$ satisfies Condition (b).   

We now begin Stage 2 of the proof: it remains to show that $\pi_{\ell}$ is potentially $\mathcal D^{(\ell)}(H)$-graphic.
If $\ell=k-\alpha(H)$, then $\mathcal D^{(\ell)}(H)$ contains $\overline{K_{\alpha(H)}}$, and since $\pi_\ell$ is sufficiently long the result holds trivially.
Thus we may assume that $\ell<k-\alpha(H)$ and that $d_1^{(\ell)}<n_\ell-\binom{k}{\lceil k/2\rceil}(k^2-1)$.

In $\pi_\ell$, let $t=\max\{i: d_i^{(\ell)}\ge k-\ell-1\}$.
First assume that $t\ge k-\ell-\alpha(H)$.
In this case, $\pi_\ell$ is degree sufficient for $K_{k-\ell-\alpha(H)}\vee\overline{K_{\alpha(H)}}$.
Furthermore, the minimum term in $\pi_\ell$ is at least $k-\ell-\alpha(H)$, and $d_1^{(\ell)}<n_\ell-\binom{k}{\lceil k/2\rceil}(k^2-1)<n_\ell-f(\alpha(H),k-\ell)$.
Thus Theorem \ref{theorem:BMDT} implies that $\pi_\ell$ is potentially $K_{k-\ell-\alpha(H)}\vee\overline{K_{\alpha(H)}}$-graphic.  Since $K_{k-\ell-\alpha(H)}\vee\overline{K_{\alpha(H)}}$ is a supergraph of a $(k-\ell)$-vertex subgraph of $H$, it follows that $\pi_\ell$ is potentially $\mathcal D^{(\ell)}(H)$-graphic.

Now assume that $t< k-\ell-\alpha(H)$.
Let $F_i$ denote an $i$-vertex induced subgraph of $H$ that achieves $\Delta(F_i) = \nabla_i(H)$.
We show that $\pi_\ell$ is degree sufficient for $K_{t}\vee F_{k-\ell-t}$.
First observe that since $k-\ell-t>\alpha(H)$ we have $\st(H)-2\ell\ge \st_{k-\ell-t}(H)-2\ell=2t-\nabla_{k-\ell-t}(H)-1$.
Thus, $$\sigma(\pi_\ell)\ge (2t-\nabla_{k-\ell-t}(H)-1)n_\ell+\omega(n_\ell)/2^{\ell}.$$

However, if $\pi_\ell$ is not degree sufficient for $K_{t}\vee F_{k-\ell-t}$, then since the minimum degree of $K_t\vee F_{k-\ell-t}$ is at most $t+\nabla_{k-\ell-t}(H)$, it follows that $d^{(\ell)}_{k-\ell}\le t+\nabla_{k-\ell-t}(H)-1.$  Taken together with the fact that $d_{t+1}^{(\ell)}\le k-\ell-2$, we then have that
$$\sigma(\pi_\ell)\le t(n_\ell-1)+(k-\ell-t-1)(k-\ell-2)+(n_\ell-k+\ell+1)(t+\nabla_{k-\ell-t}(H)-1).$$ As $\ell\le k-1$, this implies that  
$$\sigma(\pi_\ell) < (2t+\nabla_{k-\ell-t}(H)-1)n_\ell+k^2,$$
contradicting the above lower bound when $n_\ell$ is sufficiently large.  
Consequently, we have that $\pi_\ell$ is degree sufficient for $K_{t}\vee F_{k-\ell-t}$.
Note that $\alpha(F_{k-\ell-t})$ may be less than $\alpha(H)$.
Therefore we are not able to immediately apply Theorem \ref{theorem:BMDT} to obtain a realization of $\pi_\ell$ containing $K_{t}\vee F_{k-\ell-t}$.

However, in this case, $\pi_\ell$ is degree sufficient for $K_t\vee\overline{K_{k-\ell-t}}$.
Since $\pi_{\ell}$ satisfies Condition (a), the minimum term of $\pi_\ell$ is at least $k-\ell-\alpha(H)$, and by assumption, $k-\ell-\alpha(H)> t$.
Furthermore, $d_1^{(\ell)}<n_\ell-\binom{k}{\lceil k/2\rceil}(k^2-1)<n_{\ell}-f(k-\ell-t,k-\ell)$.
Thus by Theorem \ref{theorem:BMDT}, there is a realization $G_\ell$ of $\pi_\ell$ containing $K_t\vee\overline{K_{k-\ell-t}}$.  If $v_1,\ldots,v_{k-\ell}$ are the $k-\ell$ vertices of highest degree in $G_\ell$, then by Lemma~\ref{lemma:SplitGraph} we may assume that $\{v_1,\ldots,v_t\}$ is a clique that is completely joined to $\{v_{t+1},\ldots,v_{k-\ell}\}$.
Delete $v_1,\ldots,v_t$ from $G_\ell$ to obtain $G_\ell'$, and let $\pi_\ell'=\pi(G_\ell')$, with the order of $\pi_\ell'$ coming from the ordering of $\pi_\ell$.
Thus the first $k-\ell-t$ terms in $\pi_\ell'$ correspond to the vertices $\{v_{t+1},\ldots,v_{k-\ell}\}$ in $G_\ell$.
Since the minimum degree of the vertices in $G_\ell$ is at least $k-\ell-\alpha(H)$ and $t<k-\ell-\alpha(H)$, the minimum term in $\pi_\ell'$ is at least $1$.
It remains to show that there is a realization of $\pi_\ell'$ that contains a copy of $F_{k-\ell-t}$ on the vertices $\{v_{t+1},\ldots,v_{k-\ell}\}$.

To construct such a realization, place a copy of $F_{k-\ell-t}$ on the vertices $v_{t+1},\ldots,v_{k-\ell}$.
Since $n_\ell$ is sufficiently large, we may join any remaining edges incident to $\{v_{t+1},\ldots,v_{k-\ell}\}$ to distinct vertices among the remaining $n_{\ell}-(k-\ell)$ vertices.
It remains to show that there is a graph on the remaining $n_{\ell}-(k-\ell)$ vertices that realizes the residual sequence.
This sequence has at least $n_{\ell}-(k-\ell)-(k-\ell-t)(k-\ell-3)$ positive terms with the maximum term being at most $k-\ell-2$.
By Theorem \ref{theorem:EG}, the Erd\H os-Gallai criteria, such a sequence is graphic provided that $n_\ell$ is sufficiently large.
\end{proof}

As noted above, Theorem \ref{theorem:potential_asymptotics} follows immediately from Theorems \ref{theorem:LowerBound} and \ref{theorem:PotErdosStone}.

\section{Conclusion}

Having determined the asymptotic value of the potential function for general $H$, a natural next step is to study the structure of those $n$-term graphic sequences that are not potentially $H$-graphic, but whose sum is close to $\sigma(H,n)$.  This line of inquiry would be related to recent work of Chudnovsky and Seymour \cite{CS}, which for an arbitrary graphic sequence $\pi$ gives a partial structural characterization of those graphic sequences $\pi'$ for which no realization of $\pi'$ contains {\it any} realization of $\pi$ as an {\it induced} subgraph.  The first stage of this investigation appears in \cite{EFMW}.\\

{\bf Acknowledgement:}  The authors would like to acknowledge the detailed and extremely helpful efforts of an anonymous referee whose suggestions have greatly improved the clarity and presentation of this paper.


\begin{thebibliography}{99}

\bibitem{BFHJKW} A. Busch, M. Ferrara, M. Jacobson, H. Kaul, S. Hartke and D. West, Packing of Graphic $n$-tuples, {\it J. Graph Theory} \textbf{70} (2012), 29--39.

\bibitem{BK} N. Bushaw and N. Kettle. Tur\'an numbers of Multiple Paths and Equibipartite Trees, {\it Combin. Probab. Comput.} {\bf 20} (2011), 837--853 

\bibitem{CFGS} G. Chen, M. Ferrara, R. Gould and J. Schmitt, Graphic Sequences with a Realization Containing a Complete Multipartite Subgraph, {\it Discrete Math.} {\bf 308} (2008), 5712--5721.

\bibitem{CGPW} G. Chen, R. Gould, F. Pfender and B. Wei, Extremal Graphs for Intersecting Cliques, {\it J. Combin. Theory Ser. B} {\bf 89} (2003), 159--181.

\bibitem{ChenLiYin} G. Chen, J. Li and J. Yin, A variation of a classical
Tur\'{a}n-type extremal problem, {\it European J. Comb.}
{\bf 25} (2004), 989--1002.

\bibitem{CS} M. Chudnovsky and P. Seymour, Rao's Degree Sequence Conjecture, {\it J. Combin. Theory Ser. B} {\bf 105} (2014), 44--92.

\bibitem{DM} Z. Dvo\v{r}\'{a}k and B. Mohar, Chromatic number and complete graph substructures for degree sequences, \textit{Combinatorica} \textbf{33} (2013) 513--529.

\bibitem{EFMW} C. Erbes, M. Ferrara, R. Martin and P. Wenger, On the Approximate Shape of Degree Sequences the are not Potentially $H$-graphic, submitted. \textit{arXiv:1303.5622}.

\bibitem{EFGG} P. Erd\H{o}s, Z. F\"{u}redi, R. Gould and D. Gunderson, Extremal Graphs for Intersecting Triangles, {\it J. Combin. Theory Ser. B} {\bf 64} (1995), 89--100.

\bibitem{EG} P. Erd\H os and T. Gallai, Graphs with prescribed degrees, \textit{Matematiki Lapor} \textbf{11} (1960), 264--274 (in Hungarian).

\bibitem{EJL}  P. Erd\H{o}s, M. Jacobson and J. Lehel, Graphs Realizing
the Same Degree Sequence and their Respective Clique Numbers,
Graph Theory, Combinatorics and Applications (eds. Alavi, Chartrand, Oellerman and Schwenk), Vol. I, 1991, 439--449.

\bibitem{ESS} P. Erd\H{o}s and M. Simonovits, A Limit Theorem in Graph Theory, {\it Studia Sci.
Math. Hungar.} {\bf 1} (1966), 51--57.

\bibitem{ES} P. Erd\H{o}s and A. Stone, On the Structure of Linear Graphs, {\it Bull. Amer. Math.
Soc.} {\bf 52} (1946), 1087--1091.

\bibitem{F} M. Ferrara, Graphic Sequences with a Realization Containing a Union of Cliques, {\it Graphs Comb.} {\bf 23} (2007), 263--269.

\bibitem{FS} M. Ferrara and J. Schmitt,  A General Lower Bound for Potentially $H$-Graphic Sequences, {\it SIAM J. Discrete Math.} {\bf 23} (2009), 517--526.

\bibitem{GJL}  R. Gould, M. Jacobson, and J. Lehel, Potentially
$G$-graphic degree sequences, Combinatorics, Graph Theory, and
Algorithms (eds. Alavi, Lick and Schwenk), Vol. I, New York: Wiley
\& Sons, Inc., 1999, 387--400.

\bibitem{hak} S.L. Hakimi, On the realizability of a set of integers as
degrees of vertices of a graph, {\it J. SIAM Appl. Math}, {\bf 10} (1962), 496--506.

\bibitem{HaSc78} S. Hakimi and E. Schmeichel, Graphs and their degree sequences: A survey. In
\textit{Theory and Applications of Graphs}, volume 642 of \textit{Lecture Notes in
Mathematics}, Springer Berlin / Heidelberg, 1978, 225--235.

\bibitem{hav} V. Havel,  A remark on the existence of finite graphs (Czech.),
{\it $\check{C}asopis~P\check{e}st.~Mat.$} {\bf 80} (1955), 477--480.

\bibitem{KW} D. Kleitman and D. Wang, Algorithms for constructing graphs and digraphs
with given valences and factors, {\it Discrete Math.} {\bf 6} (1973) 79--88.


\bibitem{LiSong2} J. Li and Z. Song, An extremal problem on the
potentially $P_k$-graphic sequences, The International Symposium on
Combinatorics and Applications (W.Y.C. Chen et. al., eds.),
Tanjin, Nankai University 1996, 269--276.

\bibitem{LiXia}  J. Li and Z. Song, The smallest degree sum
that yields potentially $P_k$-graphical sequences, {\it J. Graph Theory}
{\bf 29} (1998), 63--72.

\bibitem{LiSongLuo}  J. Li, Z. Song, and R. Luo, The
Erd\H{o}s-Jacobson-Lehel conjecture on potentially $P_k$-graphic
sequences is true, {\it Science in China, Ser. A},
{\bf 41} (1998), 510--520.

\bibitem{YL05} J. Li and J. Yin, Two sufficient conditions for a graphic sequence to have a realization with prescribed clique size, {\it Discrete Math.} {\bf 301} (2005), 218--227.

\bibitem{LiYin2} J. Li and J. Yin, An extremal problem on
potentially $K_{r,s}$-graphic sequences, {\it Discrete Math.} {\bf 260}
(2003), 295--305.

\bibitem{LiYin09} J. Li and J. Yin, A variation of a conjecture due to Erd\H{o}s and S\'{o}s,  \textit{Acta Math. Sin.} {\bf 25} (2009), 795--802.

\bibitem{Man07} W. Mantel, Problem 28, soln by H. Gouwentak, W. Mantel, J Teixeira de Mattes, F. Schuh and W.A. Wythoff, {\it Wiskundige Opgaven} {\bf 10} (1907), 60--61.

\bibitem{P} J. Peteresen, Die Theorie der regularen Graphen, \textit{Acta Math.} \textbf{15} (1891), 193--220.

\bibitem{Ra79} A. R. Rao, The clique number of a graph with a given degree sequence. In \textit{Proceedings of the
Symposium on Graph Theory}, volume 4 of \textit{ISI Lecture
Notes}, Macmillan of India, New Delhi, 1979, 251--267.

\bibitem{SRa81} S. B. Rao, A survey of the theory of potentially $P$-graphic and forcibly $P$-graphic degree
sequences. In \textit{Combinatorics and graph theory}, volume 885 of \textit{Lecture Notes
in Math.}, Springer, Berlin, 1981, 417--440.

\bibitem{RoSo10} N. Robertson and Z. Song, Hadwiger number and chromatic number for near regular degree
sequences, \textit{J. Graph Theory} \textbf{64} (2010), 175--183.

\bibitem{Tur41} P. Tur\'{a}n, Eine Extremalaufgabe aus der Graphentheorie, {\it Mat. Fiz. Lapook} {\bf 48} (1941), 436--452.

\bibitem{YinSplit} J. Yin, A Rao-type characterization for a sequence to have a realization containing a split graph,
\textit{Discrete Math.} {\bf 311} (2011), 2485--2489.


\end{thebibliography}
\end{document}